\newtheorem{thm}{Theorem}[section]
\newtheorem{con}[thm]{Conjecture}
\newtheorem{defn}[thm]{Definition}
\newtheorem{lem}[thm]{Lemma}
\numberwithin{equation}{section}
\newcommand{\bN}{{\mathbb{N}}}
\newcommand{\bR}{{\mathbb{R}}}
  \newcommand{\M}{{\mathcal{M}}}
  \newcommand{\N}{{\mathcal{N}}}
  \newcommand{\R}{{\mathcal{R}}}
\begin{document}

\title{An Inductive Construction of $(2,1)$-tight Graphs}
\author[A. Nixon]{A. Nixon}
\address{Heilbronn institute for mathematical research, University of Bristol\\
Bristol, BS8 1TW, U.K. }
\email{tony.nixon@bristol.ac.uk}
\thanks{First author supported by  EPSRC grant EP/P503825/1}
\author[J.C. Owen]{J.C. Owen}
\address{D-Cubed, Siemens PLM Software, Park House\\
Castle Park, Cambridge, U.K.}
\email{owen.john.ext@siemens.com}
%\author[S.C. Power]{S.C.  Power}
%\thanks{Second author partially supported by an SERC grant.}
%\address{Dept.\ Math.\ Stats.\\ Lancaster University\\
%Lancaster LA1 4YF \\U.K. }
%\email{s.power@lancaster.ac.uk}

%\begin{center}

\thanks{2000 {\it  Mathematics Subject Classification.}
52C25, 05B35, 05C05, 05C10 \\
Key words and phrases:
$(k,\ell)$-tight graph, Henneberg sequence, graph extension, framework on a surface.}

\date{}

\begin{abstract}
The graphs $G=(V,E)$ with $|E|=2|V|-\ell$ that satisfy $|E'|\leq 2|V'|-\ell$ for any subgraph $G'=(V',E')$ (and for $\ell=1,2,3$) are the $(2,\ell)$-tight graphs. The Henneberg--Laman theorem characterises $(2,3)$-tight graphs 
inductively in terms of two simple moves, known as the Henneberg moves. Recently this has been extended, via the addition of a graph 
extension move, to the case of $(2,2)$-tight simple graphs. Here an alternative characterisation is provided by means of vertex-to-$K_4$ and 
edge-to-$K_3$ moves, and this is extended to the $(2,1)$-tight simple graphs by addition of an edge joining move.
\end{abstract}
\maketitle

%\bigskip

% \input{type1include}

\section{Introduction}
\label{sec31}

The purpose of this paper is to prove an inductive characterisation of simple $(2,1)$-tight graphs.

\begin{defn}[Lee and Streinu \cite{L&S}]
Let $k,\ell \in \bN$ and $\ell \leq 2k$. A graph $G=(V,E)$ is {\rm $(k,\ell)$-sparse} if for every subgraph $G'=(V',E')$, 
$|E'|\leq k|V'|-\ell$ (where if $\ell=2k$ the inequality only applies if $|V'|\geq k$). $G$ is {\rm $(k,\ell)$-tight} if $G$ is 
$(k,\ell)$-sparse and $|E|=k|V|-\ell$.
\end{defn}

In our notation a graph allows parallel edges and loops, whereas a simple graph allows neither.

The classes of $(2,\ell)$-tight simple graphs play an important role in the theory of $2$-dimensional bar-joint frameworks (see, for example, 
\cite{A&R} and 
\cite{GSS} for the general theory).  When $l=3$ these graphs correspond to generic frameworks that are minimally rigid when joints corresponding to the
vertices are constrained to lie on a plane (since any framework on a plane has three independent rigid-body motions) \cite{Lam}. When $l=2$ these graphs correspond to generic frameworks
which are minimally rigid when the joints are constrained to lie on the surface of a cylinder (since this surface allows two independent rigid-body motions) \cite{NOP}. When $l=1$
we expect the graphs to correspond to frameworks that are rigid when the joints are constrained to a surface which admits one independent rigid-body motion. These surfaces include
linearly swept  surfaces (such as an elliptical cylinder or any ruled surface with parallel rulings) and spun surfaces (such as a circular cone, torus or any 
surface formed by rotating a smooth curve). These surfaces are important in engineering since they are easily manufactured using the processes of extrusion and turning.

The characterisation of generic framework rigidity typically involves two distinct steps - an inductive construction of the relevant class of graphs and then a proof 
that the construction steps preserve the required rigidity properties.

The classical result of Henneberg \cite{Hen} characterises the class of $(2,3)$-tight graphs by recursive 
operations. Combining this 
with a result of Lovasz and Yemini \cite{L&Y}, extended by Recski \cite{Rec}, leads to:

\begin{thm}\label{thm1}[Henneberg \cite{Hen}, Lovasz and Yemini \cite{L&Y}, Recski \cite{Rec}]
For a graph $G=(V,E)$ the following are equivalent:
\begin{enumerate}
\item $G$ is $(2,3)$-tight,
\item $G$ is derivable from $K_{2}$ by the Henneberg 1 and Henneberg 2 moves,
\item for any edge $e\in E(K_{|V(G)|})$, $G\cup \{e\}$ is the edge-disjoint union of two spanning trees.
\end{enumerate}
\end{thm}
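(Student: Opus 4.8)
The plan is to establish the two equivalences (1)$\Leftrightarrow$(2) and (1)$\Leftrightarrow$(3) separately, since they rely on different machinery: the Henneberg moves for the former and the Tutte--Nash-Williams tree-packing theorem for the latter. Throughout I write $f(V')=2|V'|-|E(V')|$ for the deficiency of a vertex set $V'$, where $|E(V')|$ is the number of edges induced on $V'$; thus $G$ is $(2,3)$-sparse exactly when $f(V')\ge 3$ for every $V'$ with $|V'|\ge 2$, and a set with $f(V')=3$ is called tight.

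For (2)$\Rightarrow$(1) I would argue by induction on $|V|$. The base case $K_2$ is immediate, and since each Henneberg move adds one vertex and a net of two edges, the count $|E|=2|V|-3$ is preserved; the content is that sparsity survives. For Henneberg 1 (inserting a degree-$2$ vertex) any subgraph either avoids the new vertex or carries at most its two incident edges, so its deficiency cannot drop below $3$. For Henneberg 2 (subdividing an edge $xy$ by a degree-$3$ vertex $w$ joined to $x,y,z$) I would show that a would-be violating subgraph, after contracting $w$ back onto the edge $xy$, yields a violating subgraph of $G$, a contradiction. The reverse implication (1)$\Rightarrow$(2) is the crux. Given a $(2,3)$-tight $G$ with $|V|\ge 3$, the degree sum $2|E|=4|V|-6$ forces a vertex of degree at most $3$, while sparsity rules out degrees $0$ and $1$; so some $v$ has degree $2$ or $3$. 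A degree-$2$ vertex is removed by an inverse Henneberg 1, trivially preserving tightness. The difficulty is a degree-$3$ vertex $v$ with neighbours $a,b,c$: here $\{v,a,b,c\}$ cannot induce $K_4$, so some pair among $ab,bc,ca$ is a non-edge, and I must delete $v$ and re-insert one such non-edge while keeping sparsity. The key lemma is that at least one admissible non-edge exists; I would prove it by contradiction using submodularity of $f$, namely $f(X)+f(Y)\ge f(X\cup Y)+f(X\cap Y)$. If two candidate non-edges were simultaneously blocked by tight sets $X$ and $Y$, then, their overlap being large enough, submodularity forces $X\cup Y$ to be tight and to contain $a,b,c$, so re-attaching $v$ to $X\cup Y$ overfills it and violates sparsity of $G$. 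This submodular counting step, together with the careful handling of the degenerate case in which the blocking sets meet in a single vertex, is where the real work lies and is the \emph{main obstacle} of the whole argument.

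Finally, for (1)$\Leftrightarrow$(3) I would invoke the Tutte--Nash-Williams theorem in the form: a multigraph $H$ is the edge-disjoint union of two spanning trees if and only if $|E(H)|=2|V(H)|-2$ and $H$ is $(2,2)$-sparse. Granting this, (1)$\Rightarrow$(3) is a direct count: if $G$ is $(2,3)$-tight and $e$ is an added edge then $G\cup\{e\}$ has $2|V|-2$ edges, and any subgraph either misses $e$, inheriting $|E'|\le 2|V'|-3$ from $G$, or contains $e$, giving $|E'|-1\le 2|V'|-3$; either way $|E'|\le 2|V'|-2$, so $G\cup\{e\}$ is $(2,2)$-tight and splits into two spanning trees. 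For (3)$\Rightarrow$(1) the edge count of $G$ follows from that of $G\cup\{e\}$, and for any subgraph $G'$ on $V'$ with $|V'|\ge 2$ I would add an edge $e$ inside $V'$ and use that $G'\cup\{e\}\subseteq G\cup\{e\}$ is $(2,2)$-sparse to get $|E'|+1\le 2|V'|-2$, that is $|E'|\le 2|V'|-3$; hence $G$ is $(2,3)$-tight.
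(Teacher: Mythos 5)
The paper itself offers no proof of this theorem---it is quoted as a classical result of Henneberg, Lov\'asz--Yemini and Recski---so there is no in-paper argument to compare against; your attempt must stand on its own. Judged that way, the outline follows the standard route, and three of the four implications are sound: (2)$\Rightarrow$(1) by the usual case analysis on the degree of the new vertex in a would-be violating subgraph, and both directions of (1)$\Leftrightarrow$(3) via Nash--Williams' arboricity theorem in the extremal form you state (your counting correctly covers the case where $e$ is parallel to an existing edge of $G$, which the statement requires).

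The genuine gap is in (1)$\Rightarrow$(2), and it sits exactly where you planted your flag. Suppose the degree-$3$ vertex $v$ has neighbours $a,b,c$ and the non-edges $ab$, $bc$ are blocked by tight sets $X\supseteq\{a,b\}$ and $Y\supseteq\{b,c\}$ with $v\notin X\cup Y$. Your submodularity step closes the case $|X\cap Y|\ge 2$: then $f(X\cap Y)\ge 3$ forces $f(X\cup Y)=3$, and re-attaching $v$ gives $f=2<3$. But when $X\cap Y=\{b\}$ you only get $f(X\cup Y)\le 3+3-2=4$, hence $f\bigl((X\cup Y)+v\bigr)\le 3$, which is no contradiction at all. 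This degenerate case is not a ``careful handling'' footnote; it is the crux of the theorem and needs a further idea, namely: (i) if $ac\in E$, refine submodularity by the cross-edge $ac$ joining $X\setminus Y$ to $Y\setminus X$, which lowers the bound to $f(X\cup Y)\le 3$ after all; or (ii) if $ac\notin E$, invoke the third blocking set $Z\supseteq\{a,c\}$ and, when all pairwise intersections are singletons $\{a\},\{b\},\{c\}$, count the triple union directly---$|X\cup Y\cup Z|=|X|+|Y|+|Z|-3$ and no induced edge is counted twice, so $f(X\cup Y\cup Z)\le 9-6=3$---and again re-attach $v$ for a violation. Without one of these arguments the key lemma, and hence the whole direction (1)$\Rightarrow$(2), is unproven. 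It is instructive to compare with the paper's Lemma \ref{k4lem}, the analogue for $\ell=1,2$: there the singleton intersection is harmless precisely because a single vertex has $f=2\ge\ell$, and $\ell=3$ is exactly the value for which that shortcut fails---which is why this case cannot be waved away.
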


 Laman \cite{Lam} then characterised generic minimal rigidity on the plane by showing that the Henneberg 1 and Henneberg 2 moves preserve this property on the plane.

Nixon, Owen and Power \cite{NOP} obtained a characterisation of simple $(2,2)$-tight graphs, Theorem \ref{thm2}.

\begin{thm}\label{thm2}[Nixon, Owen and Power \cite{NOP}, Nash-Williams \cite{N-W}]
For a simple graph $G=(V,E)$ the following are equivalent:
\begin{enumerate}
\item $G$ is $(2,2)$-tight,
\item $G$ is derivable from $K_{4}$ by the Henneberg 1, Henneberg 2 and graph extension moves,
\item $G$ is the edge-disjoint union of two spanning trees.
\end{enumerate}
\end{thm}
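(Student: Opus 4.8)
The plan is to prove the two equivalences $(1)\Leftrightarrow(3)$ and $(1)\Leftrightarrow(2)$ separately, since they rest on quite different ideas: the former is a tree-packing statement and the latter the inductive construction. Throughout I write $i(X)$ for the number of edges of $G$ induced on a vertex subset $X$, and I call a nonempty $X$ \emph{tight} if $i(X)=2|X|-2$; sparsity is exactly the assertion that $i(X)\le 2|X|-2$ for all nonempty $X$, so tight sets are the ones meeting the bound with equality. For $(1)\Leftrightarrow(3)$ I would invoke the Nash--Williams--Tutte tree-packing theorem: $G$ contains two edge-disjoint spanning trees iff $e_G(P)\ge 2(r-1)$ for every partition $P=\{V_1,\dots,V_r\}$ of $V$, where $e_G(P)$ counts edges joining distinct parts. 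If $G$ is $(2,2)$-tight then summing the sparsity bound over the parts gives $\sum_i i(V_i)\le 2|V|-2r$, so $e_G(P)=|E|-\sum_i i(V_i)\ge (2|V|-2)-(2|V|-2r)=2(r-1)$; the theorem supplies two edge-disjoint spanning trees, and because $|E|=2|V|-2=2(|V|-1)$ they use every edge, yielding $(3)$. Conversely, if $G=T_1\cup T_2$ then each $T_j[X]$ is a forest, so $i(X)\le 2(|X|-1)$, and $|E|=2(|V|-1)$, giving $(1)$.

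For $(1)\Leftrightarrow(2)$ I would induct on $|V|$. The direction $(2)\Rightarrow(1)$ is routine: $K_4$ is $(2,2)$-tight, and each of the Henneberg 1, Henneberg 2 and graph extension moves raises both $|E|$ and $2|V|-2$ by the same amount (preserving $|E|=2|V|-2$), while a short check confirms sparsity is preserved — for the Henneberg moves the inserted vertex has degree $2$ or $3$, so deleting it from any offending subgraph returns a subgraph of $G$ and the counts balance. The content lies in $(1)\Rightarrow(2)$: given a $(2,2)$-tight $G\ne K_4$, I must exhibit an inverse move to a smaller $(2,2)$-tight graph.

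A counting step first shows $G$ has minimum degree at least $2$ (deleting a vertex of degree $0$ or $1$ leaves a subgraph violating sparsity) and, since $2|E|=4|V|-4$, a vertex $v$ of degree at most $3$. If $\deg v=2$, deleting $v$ is an inverse Henneberg 1 move and leaves a $(2,2)$-tight graph, so induction applies. If $\deg v=3$ with neighbours $a,b,c$, I attempt an inverse Henneberg 2 move: delete $v$ and add one absent edge among $a,b,c$. The key tool is that tight sets are closed under union and intersection once they share a vertex: for $d(X)=2|X|-i(X)$ the submodular inequality $d(X)+d(Y)\ge d(X\cup Y)+d(X\cap Y)$, combined with $d\ge 2$ on nonempty sets, forces equality for two intersecting tight sets. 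Hence if two candidate edges were each trapped in a tight set avoiding $v$, those sets would share a neighbour of $v$, their union would be a tight set containing all of $a,b,c$, and re-attaching $v$ with its three edges would violate sparsity — a contradiction. So at most one candidate edge is blocked, and whenever at most one of $ab,ac,bc$ already lies in $G$ the inverse Henneberg 2 move succeeds.

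\textbf{The main obstacle} is the residual case in which every vertex of degree at most $3$ has at least two edges among its three neighbours, so that only one or zero admissible edges remain and the single candidate may be blocked (in the extreme, $\{v,a,b,c\}$ spans a $K_4$). Here the uncrossing argument no longer produces a contradiction and no inverse Henneberg move exists; this is exactly the configuration the graph extension move is built to reverse. I would analyse the tight sets witnessing the obstruction and show they assemble into a subgraph that is the image of a graph extension, so that the inverse extension returns a strictly smaller $(2,2)$-tight graph and completes the induction. Proving that \emph{some} inverse move — Henneberg or extension — is always available, and that it preserves tightness, is where essentially all the difficulty resides; by comparison the preceding steps are bookkeeping.
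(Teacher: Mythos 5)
Your proof of $(1)\Leftrightarrow(3)$ via the Nash--Williams--Tutte tree-packing theorem is correct and is the standard route (this is exactly the content attributed to \cite{N-W}; note the paper itself does not prove Theorem \ref{thm2} at all, but cites it from \cite{NOP} and \cite{N-W}, so the only in-paper material to compare against is the machinery of Section \ref{sec2*}, in particular Lemma \ref{k4lem}). Your $(2)\Rightarrow(1)$ is fine in outline, though the sparsity check for the graph extension move is waved through rather than done. The genuine gap is in $(1)\Rightarrow(2)$: the entire content of this implication beyond the Henneberg--Laman argument is the residual case, and your treatment of it is a promise, not a proof. Saying ``I would analyse the tight sets witnessing the obstruction and show they assemble into a subgraph that is the image of a graph extension'' defers precisely the step that distinguishes this theorem from Laman's; nothing in your text explains which subgraph gets contracted, why it is a proper induced $(2,2)$-tight subgraph, why the contraction yields a \emph{simple} graph, or why the contracted graph is again $(2,2)$-tight.

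Two concrete repairs are needed. First, your case analysis at a degree-$3$ vertex $v$ with neighbours $a,b,c$ is weaker than it should be: when exactly one pair, say $ab$, is non-adjacent, the single candidate edge can in fact \emph{never} be blocked. If a tight set $Y$ contains $a,b$ but not $v$, then either $c\in Y$, and adding $v$ with its three edges gives $f(Y\cup v)=1$, or $c\notin Y$, and adding both $v$ and $c$ with their five incident edges gives $f=\ell-1$; either way sparsity is violated (this is case (2) of Lemma \ref{k4lem}). Hence the obstruction to an inverse Henneberg $2$ move is exactly that every degree-$3$ vertex lies in a copy of $K_4$ --- not your fuzzier ``at least two edges among the neighbours'' condition. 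Second, once a $K_4$ is present and $G\neq K_4$, the inverse graph extension must be exhibited: take $H$ to be a maximal proper $(2,2)$-tight subgraph of $G$ containing that $K_4$. If some vertex $x\notin V(H)$ had two neighbours in $H$, then $f(H\cup x)=2$, and $H\cup x$ is proper (it cannot equal $G$, since then $x$ would have degree $2$, contradicting the assumption that no inverse Henneberg $1$ move exists), contradicting maximality; so no outside vertex has two neighbours in $H$, the contraction $G/H$ is simple, and $f(G/H)=f(G)$ together with the correspondence between subgraphs of $G/H$ and subgraphs of $G$ (as in Lemma \ref{jco7}) gives $(2,2)$-tightness of $G/H$. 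Without this argument, or some substitute for it, the induction does not close and the proof is incomplete.
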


In this characterisation a graph extension move replaces a vertex in the graph by an arbitrary $(2,2)$-tight graph which thereby becomes a $(2,2)$-tight 
subgraph in the extended graph.   \cite{NOP} also characterised generic minimal rigidity on the cylinder by showing that the Henneberg 1, Henneberg 2 and graph extension moves preserve this property on the cylinder.

Our main result is the following inductive construction of $(2,1)$-tight simple graphs.
By $K_{5}\setminus e$ we mean the graph formed from the complete graph on $5$ vertices by removing an edge, and by 
$K_{4}\sqcup K_{4}$ we mean the graph formed by taking two copies of $K_{4}$ that intersect in a copy of the complete graph $K_2$. The construction operations are defined at the start of Section \ref{tightchar}.

\begin{thm}\label{21theorem}
 For a simple graph $G$ the following are equivalent:
\begin{enumerate}
 \item $G$ is $(2,1)$-tight,
\item $G$ can be derived from $K_{5}\setminus e$ or $K_{4}\sqcup K_{4}$ by the Henneberg 1, Henneberg 2, vertex-to-$K_4$, edge joining and edge-to-$K_3$ moves,
\item $G$ is the edge-disjoint union of a spanning tree $T$ and a spanning subgraph $P$ in which every connected component contains exactly one cycle.
\end{enumerate}
\end{thm}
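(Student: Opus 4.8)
The plan is to close the triangle of implications through the ``easy'' direction $(2)\Rightarrow(1)$, the ``hard'' inductive direction $(1)\Rightarrow(2)$, and the matroidal equivalence $(1)\Leftrightarrow(3)$, treated separately. The arithmetic backbone throughout is that a spanning tree on $|V|$ vertices has $|V|-1$ edges, while a spanning subgraph $P$ all of whose components are unicyclic has exactly $|V|$ edges (each component carries as many edges as vertices); so any graph satisfying $(3)$ has $(|V|-1)+|V|=2|V|-1$ edges, matching the tight count. I also record at the outset that every move in $(2)$ is calibrated so that $\Delta|E|=2\,\Delta|V|$, and that the two base graphs already satisfy the count: $K_5\setminus e$ has $5$ vertices and $9=2\cdot 5-1$ edges, while $K_4\sqcup K_4$ has $6$ vertices and $11=2\cdot 6-1$ edges.

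For $(2)\Rightarrow(1)$ I would first verify directly that $K_5\setminus e$ and $K_4\sqcup K_4$ are $(2,1)$-tight, and then that each of the five moves preserves $(2,1)$-tightness and simplicity. Preservation of the edge count is immediate from $\Delta|E|=2\,\Delta|V|$, so the content is preservation of sparsity. For the two Henneberg moves this is classical and identical to the $(2,3)$ and $(2,2)$ cases. For the vertex-to-$K_4$, edge-to-$K_3$ and edge-joining moves one argues that any would-be violating subgraph $G'$ of the enlarged graph either avoids the newly created cell, and so already sat in the smaller graph, or contains that cell entirely, in which case collapsing the $K_4$ or $K_3$ block returns a subgraph of the original whose count bounds that of $G'$.

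The heart of the paper is $(1)\Rightarrow(2)$. Given a $(2,1)$-tight simple graph $G$ other than the two base graphs, I must exhibit an inverse move to a smaller such graph. From $\sum_v\deg(v)=2|E|=4|V|-2$ together with sparsity (which forbids vertices of degree $0$ or $1$, since deleting one would leave at least $2|V'|$ edges on $|V'|=|V|-1$ vertices) the minimum degree is $2$ or $3$. A degree-$2$ vertex can always be removed by an inverse Henneberg~1 move: the result is simple, inherits sparsity, and has exactly $2(|V|-1)-1$ edges. So the entire difficulty concentrates on graphs of minimum degree $3$. For a degree-$3$ vertex $v$ with neighbours $a,b,c$ I would attempt an inverse Henneberg~2 move, deleting $v$ and inserting one edge among $\{a,b,c\}$; this is blocked only if (i) all three pairs are already edges, so that $\{v,a,b,c\}$ spans a $K_4$, or (ii) every addable edge would fall inside a subgraph that is already $(2,1)$-tight. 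Case (i) is disposed of by an inverse vertex-to-$K_4$ move. The serious work is case (ii): I would study the family of tight (``critical'') subgraphs trapping the low-degree vertices, show that their pairwise intersections and unions are again tight, and deduce that such configurations must expose a $K_3$ admitting an inverse edge-to-$K_3$ move, or two tight blocks sharing a $K_2$ admitting an inverse edge-joining move. This overlap analysis of critical subgraphs is where I expect the real obstacle to lie, mirroring the admissible-vertex arguments behind Theorems~\ref{thm1} and~\ref{thm2}.

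For $(1)\Leftrightarrow(3)$ I would argue matroidally. The direction $(3)\Rightarrow(1)$ is a direct count: writing $G=T\cup P$ with $T$ a spanning tree and $P$ a spanning subgraph with unicyclic components, for any $V'\subseteq V$ the edges of $T$ inside $V'$ form a forest, hence number at most $|V'|-1$, while the edges of $P$ inside $V'$ again have unicyclic components and so number at most $|V'|$; thus $|E'|\le 2|V'|-1$, with global equality giving tightness. For $(1)\Rightarrow(3)$ I would invoke the matroid union theorem for the union of the cycle matroid $M(G)$ and the bicircular matroid $B(G)$, whose bases are the spanning subgraphs with one cycle per component. Decomposing an arbitrary edge set into its connected pieces reduces the union (Nash-Williams/Edmonds) condition, piece by piece, to $|E'|\le 2|V'|-1$ on the subgraphs that carry a cycle, the forest pieces being automatic; hence $(2,1)$-sparsity is exactly independence in $M(G)\vee B(G)$, and $(2,1)$-tightness makes $E(G)$ a base of this union, forcing the desired partition into a spanning tree and a spanning unicyclic subgraph. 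This is the $(2,1)$ analogue of the two-spanning-tree decomposition of Theorem~\ref{thm2}, with one graphic matroid replaced by the bicircular matroid.
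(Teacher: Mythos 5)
Your treatment of $(2)\Rightarrow(1)$ and of $(1)\Leftrightarrow(3)$ is sound and matches the paper, which likewise handles $(1)\Leftrightarrow(3)$ by the union of the cycle and bicircular (bicycle) matroids. The genuine gap is in $(1)\Rightarrow(2)$, and it is a misplacement of where the difficulty lies. Your ``case (i)'' --- a degree-$3$ vertex whose neighbourhood spans a $K_4$ --- is \emph{not} disposed of by an inverse vertex-to-$K_4$ move: contracting the $K_4$ produces a multigraph whenever some triangle of $G$ shares two vertices with that $K_4$ (Lemma \ref{jco7} of the paper), and since the whole theorem is about \emph{simple} graphs this is fatal. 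This is not a corner case: Figure \ref{edgetok3nec} exhibits a tight graph with no inverse Henneberg move and no allowable $K_4$-to-vertex contraction, so your scheme stalls there. Handling exactly this obstruction is the reason the edge-to-$K_3$ move is in the statement at all, and it is the heart of the paper: one must show (via the triangle-sequence machinery of Lemmas \ref{jco3}, \ref{jco4a}, \ref{jco10}, \ref{jco4}) that if neither a $K_3$-to-edge nor a $K_4$-to-vertex contraction is allowable, then overlapping triangles force every copy of $K_3$ to lie in a copy of $K_5\setminus e$ or $K_4\sqcup K_4$; a disjointness-and-counting argument (Lemma \ref{jco9}) then yields either one of the two base graphs or a bridge giving an edge separation move. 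None of this structure theory appears in your outline.

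Conversely, your ``case (ii)'', where you expect ``the real obstacle'' --- inverse Henneberg 2 blocked by tight subgraphs --- is in fact vacuous: the paper's Lemma \ref{k4lem} shows that if a degree-$3$ vertex is not in a $K_4$, the putative blocking subgraphs $Y_{12}$, $Y_{23}$ can be completed to subgraphs $Y$ with $f(Y)=\ell-1<\ell$, contradicting $(2,\ell)$-sparsity; so an inverse Henneberg 2 move is always available in that case, by a short argument. Thus your proposal spends its effort on a case that resolves easily and waves through the case that contains the paper's main technical content (the simplicity of the contracted graph), which is precisely the difficulty the authors flag in the introduction. As written, the induction would fail at the first tight graph in which every degree-$3$ vertex lies in a $K_4$ adjacent to a further triangle.
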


We expect that each of the construction moves in (2) of this theorem also preserves minimal generic rigidity on surfaces which admit one rigid body motion.  We present this as a conjecture for subsequent investigation.

As a by-product of our arguments we also show the following result giving an alternative inductive construction of $(2,2)$-tight graphs. The construction should be easier to apply since we only insert prescribed small subgraphs rather than an arbitrary graph in the class.

\begin{thm}\label{22refine}
For a simple graph $G=(V,E)$ the following are equivalent:
\begin{enumerate}
\item $G$ is $(2,2)$-tight,
\item $G$ is derivable from $K_{4}$ by the Henneberg 1, Henneberg 2, vertex-to-$K_4$ and edge-to-$K_3$ moves.
\end{enumerate}
\end{thm}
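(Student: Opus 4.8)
The two implications are of unequal difficulty, and I would dispatch (2) $\Rightarrow$ (1) first as a warm-up. One checks that $K_4$ is $(2,2)$-tight and that each of the four moves preserves $(2,2)$-tightness. Preservation of the equality $|E|=2|V|-2$ is immediate from counting: a Henneberg 1 move changes $(|V|,|E|)$ by $(1,2)$, a Henneberg 2 move by $(1,2)$, a vertex-to-$K_4$ move by $(3,6)$, and an edge-to-$K_3$ move by $(1,2)$, each leaving $2|V|-|E|$ unchanged. The real content is preservation of sparsity, i.e. that no move creates a subgraph $G'$ with $|E'|>2|V'|-2$. For the Henneberg moves this is the classical argument; for the two gadget moves one checks that any subgraph of the enlarged graph either avoids the inserted $K_4$/$K_3$ (and so inherits sparsity from the original) or meets it, in which case the $(2,2)$-count of the gadget exactly absorbs the extra vertices and edges. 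Since $K_4$ is $(2,2)$-tight, every derivable graph is $(2,2)$-tight.

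The substance lies in (1) $\Rightarrow$ (2), which I would prove by induction on $|V|$, the base case being the unique $(2,2)$-tight simple graph on four vertices, $K_4$ (the count $|E|=6$ forces completeness). For the inductive step I would first record two degree facts: a $(2,2)$-tight simple graph has minimum degree at least $2$ (a vertex of degree $0$ or $1$ would leave $G-v$ over-dense), and since $2|E|=4|V|-4$ it has a vertex of degree $2$ or $3$. If some vertex $v$ has degree $2$, then deleting it — the inverse of a Henneberg 1 move — always returns a smaller $(2,2)$-tight simple graph, since the count $|E|-2=2(|V|-1)-2$ is correct and sparsity is inherited; so we are done. The interesting situation is therefore when $G$ has minimum degree exactly $3$.

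At a degree-$3$ vertex $v$ with neighbours $a,b,c$ I would attempt the inverse of a Henneberg 2 move: delete $v$ and add one of the missing edges among $\{ab,bc,ca\}$. This succeeds unless every such addition is blocked, either because the edge is already present or because adding it would create a subgraph already satisfying $|E'|=2|V'|-2$ (a \emph{tight}, or critical, subgraph spanning that pair). Analysing this blocked case is the heart of the argument, and it is where the gadget moves enter. The engine is the submodularity of the count function $G'\mapsto 2|V'|-|E'|$, which implies that the union of two tight subgraphs sharing at least two vertices is again tight; iterating this over the critical subgraphs that block the various edge-additions pins down the local structure at $v$. I expect to conclude that the blocking forces either a $K_4$ on $\{v,a,b,c\}$, which can be collapsed by an inverse vertex-to-$K_4$ move, or a triangle configuration collapsible by an inverse edge-to-$K_3$ move.

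The hard part is not merely \emph{locating} such a $K_4$ or $K_3$ but ensuring the corresponding contraction is an \emph{admissible} inverse move: the resulting graph must remain both simple and $(2,2)$-sparse. Simplicity can fail if two vertices of the contracted gadget share an external neighbour, so the combinatorial work is to show that, among the candidate dense configurations produced by the critical-subgraph analysis, one can always be chosen whose external neighbourhoods are disjoint enough for the contraction to stay simple, while sparsity of the contracted graph follows from that of $G$. An alternative and possibly shorter route would bypass the direct reduction by invoking Theorem \ref{thm2}: it then suffices to simulate a single graph-extension move (insertion of an arbitrary $(2,2)$-tight graph $H$ at a vertex) by the four elementary moves, using induction on $|H|$ to build $H$ in place from a vertex-to-$K_4$ seed together with Henneberg and edge-to-$K_3$ steps; there the one thing to verify is compatibility with the edges meeting the insertion vertex.
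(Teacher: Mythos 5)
Your outline for $(1)\Rightarrow(2)$ reproduces the paper's skeleton faithfully up to a point: induction on $|V|$, inverse Henneberg 1 at a degree-2 vertex, and the observation that a blocked inverse Henneberg 2 at a degree-3 vertex forces dense local structure (this is Lemma \ref{k4lem}; note the blocking forces exactly a $K_4$ on $\{v,a,b,c\}$ -- the edge-to-$K_3$ move does not arise at this stage, but only later, as a remedy when contracting that $K_4$ would break simplicity, cf.\ Lemma \ref{jco7}). But from there on your text is a declaration of intent rather than an argument. The step you correctly identify as the hard part -- showing that some contraction is \emph{allowable}, i.e.\ keeps the graph simple and sparse -- is the entire content of the paper's Section \ref{sec2*}, and your proposed resolution (``one can always be chosen whose external neighbourhoods are disjoint enough'') is neither proved nor, in fact, how the argument resolves. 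The paper proceeds by a global structure theory of triangle sequences (Definition \ref{jco2}, Lemma \ref{jco3}): if no $K_3$-to-edge move is allowable then every edge lying in a triangle lies in at least two triangles (Lemma \ref{jco4a}); this forces every triangle into a copy of $K_4$ (Lemma \ref{jco4}, via a counting bound on chords, Lemma \ref{jco10}); and failure of simplicity for the $K_4$-to-vertex move then chains copies of $K_4$ together so that every triangle lies in a copy of $K_5\setminus e$ or $K_4\sqcup K_4$ (Lemma \ref{jco5}). Since $f(K_5\setminus e)=f(K_4\sqcup K_4)=1<2$, these subgraphs violate $(2,2)$-sparsity, so for $(2,2)$-tight $G\neq K_4$ an allowable move always exists (Lemma \ref{jco8}). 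The conclusion is reached by exhibiting a forbidden dense subgraph when \emph{all} moves are blocked, not by selecting a good gadget among candidates; without this analysis (or an equivalent) your inductive step does not close.

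Your suggested shortcut via Theorem \ref{thm2} also has an unaddressed hole, and it is exactly the issue you wave at with ``compatibility with the edges meeting the insertion vertex.'' When you build $H$ in place inside $G$, the external edges (the former edges at $v$) must be attached to already-existing vertices at every intermediate stage, and none of the four moves lets you re-route an existing edge onto a newly created Henneberg vertex. Concretely, suppose the construction of $H$ creates a vertex $w$ by a Henneberg 1 move and, in $G$, two or more external edges attach to $w$; then $w$ has degree at least $4$ in $G$, so it cannot be the vertex created by the final move of any simulation, and one would have to reorganise the entire construction sequence of $H$ to avoid this -- a problem of essentially the same difficulty as the direct reduction. (With exactly one external edge at $w$ the Henneberg 1 move in $H$ can be promoted to a Henneberg 2 move in $G$, but the general case is the substance, not a detail.) So neither route in the proposal, as written, constitutes a proof of $(1)\Rightarrow(2)$; the direction $(2)\Rightarrow(1)$ is fine and is standard.
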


The main difficulty in proving theorem \ref{21theorem} is the requirement that the inductive construction involves only simple graphs. This requirement arises because we are 
interested in frameworks in which the distance between a pair of joints is the usual distance measured as a straight line in $3$-space. Minimal rigidity then clearly requires that
two vertices are joined by at most edge. Whitely \cite{Whi5} has considered frameworks embedded on surfaces in which the distance between a pair of joints is a geodesic distance over the surface. In this case a pair of vertices may be separated by more than one distinct geodesic distance and the class of graphs may be extended to include multiple edges between a pair of vertices. Similarly periodic frameworks \cite{B&S}, \cite{M&T}, \cite{Ros} on the plane may include edges connecting between different cells and result in graphs with multiple edges.

We note that for the case of $(k,\ell)$-tight graphs (permitting parallel edges and loops) there are elegant recursive constructions requiring Henneberg type operations only \cite{Fr&S}, \cite{F&S}.

A further motivation for our work is the hope that understanding the recursive constructions for $(2,\ell)$-tight graphs of the 
various types will provide insight into $(3,6)$-tight graphs. 
These are the graphs relevant to major open problems in $3$-dimensional rigidity theory  \cite{GSS}, \cite{T&W}, \cite{Whi4}.
Note that these graphs are necessarily simple and are outside the matroidal range.
Indeed for $\ell<6$ adding \emph{any} $\ell-3$ edges to a $(3,6)$-tight graph results in a graph with a decomposition into three edge 
disjoint spanning trees but for $l=6$ it does not, see \cite{Haa}.

From our main theorems one can quickly derive sparsity variants. That is, characterisations of $(2,\ell)$-sparsity in terms of 
recursive operations. If Conjecture \ref{tjcon} is true then this has applications in computer aided design \cite{Owen} where the 
emphasis is on establishing whether a system of constraint equations admits a matrix with linearly independent rows.

The paper is organised as follows. Section \ref{tightchar} defines the recursive moves we will consider. The key difficulty is the 
construction theory of Section \ref{sec2*}, in which we discuss the sufficiency of the moves. The main step is Lemma \ref{jco4}. 
Here we take a seemingly mild requirement that each edge in a copy of $K_3$ is in at least two copies of $K_3$ or is in a separate 
$(2,1)$-tight subgraph. This leads to the strong conclusion that every copy of $K_3$ is in a copy of $K_4$. This convenient property 
is used to derive the key implication in the proofs of Theorems \ref{21theorem} and \ref{22refine}. Finally Section \ref{21apps} 
discusses rigidity theory and potential applications of our results therein.

We would like to thank Stephen Power for some helpful discussions and the anonymous referees for a number of helpful comments.

\section{Simple $(2,\ell)$-tight Graphs}
\label{tightchar}

It will be convenient for us to define $f(H):=2|V(H)|-|E(H)|$ for a graph $H$.

\begin{defn}
Let $\ell=1,2,3$. A simple graph $G$ is \emph{$(2,\ell)$-sparse} if $f(H) \geq \ell$ for all subgraphs $H$ of $G$ with at least one edge and is \emph{$(2,\ell)$-tight} if it is $(2,\ell)$-sparse and $f(G)=\ell$.
\end{defn}

We begin by recalling and formally defining the construction moves under consideration. Define the \emph{Henneberg $0$} move to be the addition of a vertex of degree $0$ or of degree $1$ to a graph. The inverse Henneberg $0$ move is the removal of a vertex of degree $0$ or degree $1$ from a graph.

The \emph{Henneberg $1$ move} \cite{Hen}, is the addition of a degree $2$ vertex to a graph. The inverse Henneberg $1$ move is the removal of a degree $2$ vertex from a graph.

The \emph{Henneberg $2$ move} \cite{Hen}, removes an edge $uv$ and adds a vertex $x$ and edges $xu,xv,xw$ for some vertex $w$. The inverse Henneberg $2$ move removes a degree $3$ vertex $x$ (and incident edges $xu,xv,xw$) and adds an edge $uv, uw$ or $vw$.

Let $G$ be $(2,1)$-sparse containing a copy of $K_4$. Write $G / K_4$ for the (possibly multi)graph formed by contracting this copy of $K_4$ to a vertex $v_*$. That is $G / K_4$ has vertex set $(V(G) \setminus V(K_4)) \cup \{v_{*}\}$ and edge set $(E(G) \setminus E(K_4)) \cup E_{*}$ where $E_{*}$ consists of the edges $vv_{*}$ associated with edges $vw$ where $v \in G / K_4$ and $w \in K_4$. We call this operation a \emph{$K_4$-to-vertex move}. The inverse move, he \emph{vertex-to-$K_4$ move} is illustrated in Figure \ref{fig:vtok4}.

The graph extension move mentioned in the introduction refers to the construction of $G$ from $G/H$ where $H$ is a proper induced $(2,2)$-tight subgraph of $G$. This move was used in \cite{NOP} and is similar to
vertex expansion moves used in graph theory, \cite{Die}.

\begin{center}
\begin{figure}[ht]
\centering
\includegraphics[width=5cm]{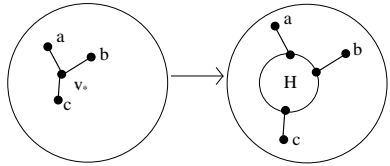}
\caption{With $H=K_4$, an example of the vertex-to-$K_4$ move and, with $H$ a proper induced $(2,2)$-tight subgraph of $G$, graph extension.}
\label{fig:vtok4}
\end{figure}
\end{center}

Let $G$ be a graph with an edge $uv$ such that the neighbours of $v$ are $a_1,\dots,a_n$. The \emph{edge-to-$K_3$ move}, see Figure \ref{Vertex splitting*}, (often referred to as vertex splitting in the literature, \cite{Whi6}) removes the edge $uv$ and the vertex $v$ and all the edges $va_i$, it replaces them with the vertices $v_1,v_2$ and edges $uv_1,uv_2,v_1v_2$, plus some bipartition of the remaining edges $v_1a_j$ and $v_2a_k$ (with one side possibly empty). The inverse move, called the \emph{$K_3$-to-edge move}, takes a copy of $K_3$ (with vertices $u,v_1,v_2$), removes the edges $uv_1,uv_2,v_1,v_2$, merges two vertices $v_1,v_2$ into a single vertex $v$ which is adjacent to all the vertices $v_1$ and $v_2$ were adjacent to and adds the edge $uv$.

\begin{center}
\begin{figure}[ht]
\centering
\includegraphics[width=6cm]{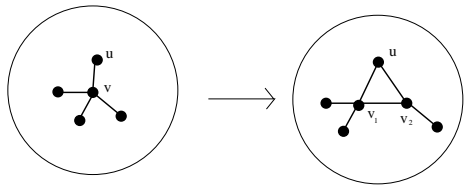}
\caption{The edge-to-$K_3$ move.}
\label{Vertex splitting*}
\end{figure}
\end{center}

Let $K$ and $H$ be $(2,1)$-tight graphs with vertices $u \in K$ and $v \in H$. We will say that $G$ is formed from $K$ and $H$ by an \emph{edge joining move}, see Figure \ref{Join} if $V(G)=V(K)\cup V(H)$ and $E(G)=E(K)\cup E(H) \cup \{uv\}$. Further, if there is a single edge $uv$ joining two $(2,1)$-tight subgraphs $G$ and $H$, then we will call the inverse move an \emph{edge separation move}.

\begin{center}
\begin{figure}[ht]
\centering
\includegraphics[width=8cm]{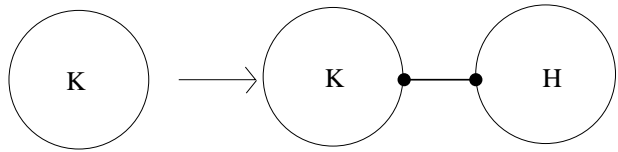}
\caption{The edge joining move.}
\label{Join}
\end{figure}
\end{center}

With respect to Theorem \ref{22refine} note that Figure \ref{edgetok3nec} illustrates the necessity of the $K_3$-to-edge move when we restrict graph contraction to the $K_4$-to-vertex move. 

We note that $(1) \Leftrightarrow (3)$ in Theorem \ref{21theorem} can be proven in an elementary way by showing that the construction operations preserve the spanning subgraph decomposition. More efficiently, these implications follow from matroidal results; the $(1,1)$-tight graphs are the bases of the cycle matroid and the $(1,0)$-tight graphs are the bases of the bicycle matroid. The union (on the same ground set of vertices) of a cycle matroid and a bicycle matroid (with empty intersection) give the results, see \cite{Fr&S}, \cite{G&W}, \cite{N-W}, \cite{Whi5}.

\begin{center}
\begin{figure}[ht]
\centering
\includegraphics[width=5cm]{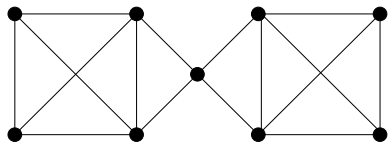}
\caption{A $(2,2)$-tight graph that requires the $K_3$-to-edge move when we restrict the graph contraction move to the $K_4$-to-vertex move.}
\label{edgetok3nec}
\end{figure}
\end{center}

\section{Construction Theory}
\label{sec2*}

In this section we consider $(1) \Rightarrow (2)$ in Theorems \ref{21theorem} and \ref{22refine}. That is, we consider whether an arbitrary $(2,2)$ or $(2,1)$-tight graph can be reduced by applying one of our short list of moves (relevant to each case) to produce a smaller $(2,2)$-tight or $(2,1)$-tight graph. 

We begin by showing that in a $(2,1)$-tight or $(2,2)$-tight graph, an inverse Henneberg 2 move is available unless all degree 3 vertices are in copies of $K_{4}$.

\begin{lem}\label{k4lem}
 Let $G$ be $(2,\ell)$-tight for $\ell=1,2$ with a vertex $v\in V(G)$ of degree $3$ with neighbours $v_{1},v_{2},v_{3}$ in $G$. Then either $v$ is contained in a copy of 
$K_{4}$ or $G'=(G\setminus v) \cup e$ (for $e=v_1v_2,v_2v_3$ or $v_3v_1$) is $(2,1)$-tight.
\end{lem}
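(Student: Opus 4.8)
The plan is to translate the availability of an inverse Henneberg $2$ move into a pure sparsity condition and then to show that the only obstruction to all three moves is that $v$ lies in a $K_4$. For a candidate edge $e=v_iv_j$ set $G'=(G\setminus v)\cup e$; then $|V(G')|=|V(G)|-1$ and $|E(G')|=|E(G)|-2$, so $f(G')=f(G)=\ell$, and $G'$ is $(2,\ell)$-tight exactly when it is simple and $(2,\ell)$-sparse (the move preserves the value of $f$, so for $\ell=2$ the outcome is $(2,2)$-tight, for $\ell=1$ it is $(2,1)$-tight). Since $G\setminus v\subseteq G$ is sparse, any subgraph $S\subseteq G'$ with $f(S)<\ell$ must contain $e$, and then $S\setminus e\subseteq G\setminus v$ has $f(S\setminus e)=f(S)+1$; comparing with sparsity of $G\setminus v$ shows that $G'$ fails to be sparse precisely when some subgraph $H\subseteq G\setminus v$ with $v_i,v_j\in V(H)$ is $(2,\ell)$-tight. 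Calling a pair $\{v_i,v_j\}$ \emph{blocked} when either $v_iv_j\in E(G)$ or such a tight $H$ exists, the lemma reduces to the claim that if $v$ lies in no copy of $K_4$ then at least one of the three pairs is unblocked.

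The crux is the observation that no subgraph $H\subseteq G\setminus v$ which is $(2,\ell)$-tight can contain all three of $v_1,v_2,v_3$: adjoining $v$ together with the edges $vv_1,vv_2,vv_3$ would produce a subgraph of $G$ with $f$-value $f(H)+2-3=\ell-1<\ell$, contradicting sparsity of $G$. I would combine this with the modularity identity $f(H_1\cup H_2)+f(H_1\cap H_2)=f(H_1)+f(H_2)$ and with sparsity to obtain the gluing principle: if $H_1,H_2$ are $(2,\ell)$-tight and share a vertex, then $H_1\cup H_2$ is $(2,\ell)$-tight. For $\ell=2$ a single shared vertex already gives $f(H_1\cap H_2)=2$; for $\ell=1$ one first notes that two tight subgraphs cannot meet in a lone vertex (that would force $f(H_1\cup H_2)\le 0$), so the intersection must carry an edge and hence be tight. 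In either case $f(H_1\cup H_2)=\ell$.

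With these in hand I would argue by the number $p$ of edges of $G$ among $v_1v_2,v_2v_3,v_3v_1$. If $p=3$ then $\{v,v_1,v_2,v_3\}$ spans a $K_4$, which is the first alternative. If $p\le 2$, suppose every pair is blocked and seek a contradiction by manufacturing a tight subgraph on $\{v_1,v_2,v_3\}$. When $p\le 1$, at least two pairs are absent, and any two of their certifying tight subgraphs share a common neighbour of $v$ (for instance the certificates of $\{v_1,v_2\}$ and $\{v_2,v_3\}$ share $v_2$); by the gluing principle their union is tight and contains $v_1,v_2,v_3$. When $p=2$, say $v_1v_2,v_1v_3\in E(G)$ while $\{v_2,v_3\}$ is certified by a tight $H$: if $v_1\in V(H)$ we are done at once, and otherwise adjoining $v_1$ and the present edges $v_1v_2,v_1v_3$ to $H$ gives a subgraph with $f=\ell+2-2=\ell$, again tight and containing all three neighbours. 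Each outcome contradicts the crux observation, so some pair is unblocked and the corresponding $e$ yields a $(2,\ell)$-tight $G'$.

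I expect the gluing principle to be the only genuine obstacle, in particular the $\ell=1$ subtlety that two tight subgraphs cannot overlap in a single vertex; everything else is inclusion–exclusion bookkeeping for $f$.
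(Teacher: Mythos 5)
Your proposal is correct and takes essentially the same route as the paper's proof: the same case split on the number of edges among $v_1,v_2,v_3$, the same characterisation of the obstruction to an inverse Henneberg 2 move as a $(2,\ell)$-tight subgraph of $G\setminus v$ containing both endpoints of the candidate edge, and the same union/intersection counting with $f$. Your ``crux'' observation and ``gluing principle'' are simply the paper's inline computations (adding $v$, or $v$ together with a third neighbour, back to the obstruction subgraph(s)) factored out as named steps; there is no gap.
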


\begin{proof}
With suitable labeling of vertices, we distinguish three cases corresponding to the possible edges among the neighbours of $v$.  Either

\begin{enumerate}
\item $v_1v_2,v_1v_3,v_2v_3 \in E$, 
\item $v_1v_2 \notin E,v_1v_3,v_2v_3 \in E$, or
\item $v_1v_2,v_2v_3 \notin E$.

\end{enumerate}

In case 1, $v,v_1,v_2,v_3$ induce a copy of $K_4$ in $G$.

\begin{center}
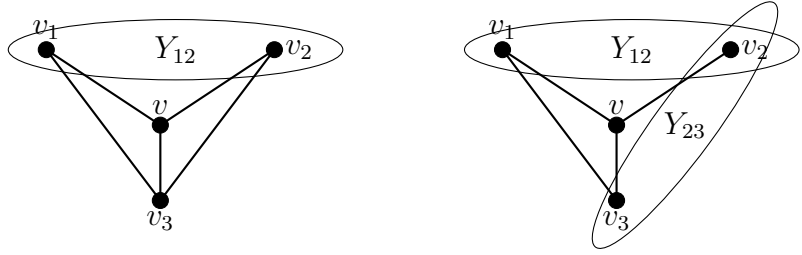
\begin{figure}[ht]
\centering
\begin{tikzpicture}
 \filldraw[black] 
(0,0) circle (3pt)node[anchor=north]{$v_3$}
(0,1) circle (3pt)node[anchor=south]{$v$}
(-1.5,2) circle (3pt)node[anchor=south]{$v_1$}
(1.5,2) circle (3pt)node[anchor=west]{$v_2$}

(6,0) circle (3pt)node[anchor=north]{$v_3$}
(6,1) circle (3pt)node[anchor=south]{$v$}
(4.5,2) circle (3pt)node[anchor=south]{$v_1$}
(7.5,2) circle (3pt)node[anchor=west]{$v_2$};
 \draw[black,thick]
  (0,0) -- (0,1) -- (-1.5,2) -- (0,0);
 \draw[black,thick]
  (0,1) -- (1.5,2) -- (0,0);
   \draw[black,thick]
  (6,0) -- (6,1) -- (4.5,2) -- (6,0);
   \draw[black,thick]
  (6,1) -- (7.5,2);
\draw[black] (0.2,2) ellipse (2.2 and 0.4)node{$Y_{12}$};
\draw[black] (6.2,2) ellipse (2.2 and 0.4)node{$Y_{12}$};
\draw[black, rotate around={54:(6.9,1)}] (6.9,1) ellipse (2 and 0.4)node{$Y_{23}$};
\end{tikzpicture}
\caption{The graph on the left illustrates case (2): if there was a subgraph $Y_{12}$ preventing the application on an inverse Henneberg 2 move on $v$ then the graph pictured would be over-counted. Similarly the graph on the right illustrates case (3): if there are subgraphs preventing the addition of $v_1v_2$ and $v_2v_3$ then the graph pictured would be over-counted. }
\label{fig:21h2proof}
\end{figure}
\end{center}

Figure \ref{fig:21h2proof} illustrates the proof in cases $2,3$. Define $Y_{12}$ to be a $(2,\ell)$-tight subgraph of $G$ containing $v_1,v_2$ but not $v_3,v$. Similarly define $Y_{13}$ and $Y_{23}$.

In case 2, $G'=(V\setminus v, (E\setminus \{vv_1,vv_2,vv_3\})\cup v_1v_2)$ is $(2,\ell)$-tight unless there exists a subgraph $Y_{12}$
of $G$. But then the addition of $v,v_3$ and their five incident edges to $Y_{12}$ gives a subgraph $Y$ of $G$ with $f(Y)=\ell-1$ which contradicts the fact that $G$ is $(2,\ell)$-tight.

In case 3, either $G'=(V\setminus v, (E\setminus \{vv_1,vv_2,vv_3\})\cup v_1v_2)$ or  
$G'=(V\setminus v, (E\setminus \{vv_1,vv_2,vv_3\})\cup v_2v_3)$ is $(2,\ell)$-tight unless there exists subgraphs $Y_{12}$ and $Y_{23}$
of $G$. Then 
\[f(Y_{12}\cup Y_{23})=f(Y_{12})+f(Y_{23})-f(Y_{12}\cap Y_{23}) \leq \ell+\ell-\ell=\ell \]
since $Y_{12}\cap Y_{23} \supseteq v_2$ and $Y_{12}\cap Y_{23}\subset G$. But then the addition of $v$ and its three incident edges to $ Y_{12}\cup Y_{23}$ gives a subgraph $Y$ of $G$ with $f(Y)=\ell-1$ which contradicts the fact that $G$ is $(2,\ell)$-tight.
\end{proof}

\begin{lem} \label{jco6}
Let $G=(V,E)$ be a $(2,\ell)$-tight graph for $\ell=1,2$. Then $G$ has either
an inverse Henneberg 1 move, an inverse Henneberg 2 move or at least $2l$ degree $3$ vertices, each of which is in a copy of $K_4$.
\end{lem}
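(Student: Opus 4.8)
The plan is to assume that $G$ admits neither an inverse Henneberg 1 move nor an inverse Henneberg 2 move, and to deduce that $G$ has at least $2\ell$ vertices of degree $3$, each lying in a copy of $K_4$. The first step is to pin down the minimum degree of $G$. A vertex $v$ of degree $0$ or degree $1$ cannot occur: deleting it produces a subgraph $H$ with $f(H)=\ell-2$ or $f(H)=\ell-1$ respectively, and since $\ell\in\{1,2\}$ each of these is strictly less than $\ell$, contradicting $(2,\ell)$-sparsity as soon as $H$ retains an edge. A vertex $v$ of degree $2$, on the other hand, does yield an inverse Henneberg 1 move, because $f(G\setminus v)=2(|V|-1)-(|E|-2)=2|V|-|E|=\ell$ and $G\setminus v$ inherits sparsity from $G$, hence is $(2,\ell)$-tight. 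Since we have assumed no such move is available, $G$ has no vertex of degree $2$, and therefore $G$ has minimum degree at least $3$.

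Next I would carry out a degree count. Using tightness in the form $\sum_{v\in V}\deg(v)=2|E|=4|V|-2\ell$, we obtain
\[
\sum_{v\in V}\bigl(4-\deg(v)\bigr)=4|V|-\bigl(4|V|-2\ell\bigr)=2\ell.
\]
Since every vertex has degree at least $3$, the summand $4-\deg(v)$ equals $1$ when $\deg(v)=3$ and is non-positive when $\deg(v)\ge 4$. Writing $n_3$ for the number of degree-$3$ vertices, this gives
\[
2\ell=n_3+\sum_{\deg(v)\ge 4}\bigl(4-\deg(v)\bigr)\le n_3,
\]
so $G$ has at least $2\ell$ vertices of degree $3$.

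Finally I would invoke Lemma \ref{k4lem} to control these degree-$3$ vertices. For each such $v$, that lemma asserts that either $v$ lies in a copy of $K_4$ or some $G'=(G\setminus v)\cup e$ is $(2,\ell)$-tight, the latter being precisely an available inverse Henneberg 2 move. As we have assumed no inverse Henneberg 2 move exists, every degree-$3$ vertex must lie in a copy of $K_4$. Combined with the count above, this produces the required $2\ell$ degree-$3$ vertices, each contained in a $K_4$, completing the argument. The only genuinely delicate point is the bookkeeping in the first step—verifying that low-degree vertices either trigger an inverse Henneberg move or break sparsity—while the degree count and the appeal to Lemma \ref{k4lem} are then immediate.
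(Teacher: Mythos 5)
Your proposal is correct and follows essentially the same route as the paper: rule out low-degree vertices, use the count $\sum_{v}(4-\deg(v))=2\ell$ under the minimum-degree-$3$ assumption to find at least $2\ell$ degree-$3$ vertices, and then apply Lemma \ref{k4lem} to place each of them in a copy of $K_4$ when no inverse Henneberg 2 move exists. Your treatment of the degree-$0$, $1$, $2$ cases is slightly more explicit than the paper's, but the argument is the same.
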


\begin{proof}
$G$ has no degree $1$ vertices since this would imply that there is an edge $ab \in E(G)$ and 
$G=Y \cup ab$ with $b \notin V(Y)$ and $f(Y) = l-1$.

Assume $G$ has no inverse Henneberg 1 move. Then every vertex has degree at least three.

Label the vertices $1,\dots, |V|$ and let $d(i)$ denote the degree of vertex $i$. The summation over the degree of all vertices in $G$ gives $2|E|$. Hence the 
condition that $G$ is $(2,\ell)$-tight gives
\begin{equation}
\sum_{i=1}^{|V|}(4-d(i))=2l.
\label{degreecount}
\end{equation}

Since $d(i) \geq 3$ this implies $G$ has at least $2\ell$ degree $3$ vertices. By
Lemma \ref{k4lem} $G$ has an inverse Henneberg 2 move or each of these degree $3$ vertices is in a copy
of $K_4$.
\end{proof}

We will say that a $K_3$-to-edge or a $K_4$-to-vertex move is \emph{allowable} if it results in a graph
which is simple and $(2,\ell)$-tight.

The following lemma shows that a $K_4$-to-vertex move is allowable provided that the copy of $K_4$ does not have two vertices in a single copy of $K_3$.

We use the notation $K_n(v_1, \dots, v_n)$ for a subgraph of $G$ which is a copy of the complete
graph $K_n$ on the vertices $v_1, \dots, v_n$. 

\begin{lem} \label{jco7}
Let $G$ be a $(2,\ell)$-tight graph with $|V(G)|>4$ and let $G \rightarrow  G/K_4$ be a $K_4$-to-vertex move. Then 
$G/K_4$ is 
simple and $(2,\ell)$-tight unless there is a $K_3$ in $G$ with 
$|V(K_3 \cap K_4)|=2$.
\end{lem}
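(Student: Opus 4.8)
The plan is to verify the two required properties of $G/K_4$ separately, working throughout under the hypothesis that $G$ contains no copy of $K_3$ meeting the contracted $K_4$ in exactly two vertices. Write the contracted vertex as $v_*$, and note at the outset that $|V(G/K_4)| = |V(G)|-3$, since four vertices collapse to one.

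First I would dispose of simplicity. A loop at $v_*$ could only arise from an edge of $G$ with both endpoints in the $K_4$; all such edges lie in $E(K_4)$ and are deleted by the move, so no loops appear. A multiple edge could only be a pair of edges $vv_*$, which would mean that a single vertex $v \in V(G)\setminus V(K_4)$ is joined in $G$ to two distinct vertices $w_1,w_2$ of the $K_4$. Since $w_1w_2 \in E(K_4)$, the vertices $v,w_1,w_2$ would then span a copy of $K_3$ with $|V(K_3\cap K_4)|=2$, contrary to hypothesis. Hence each vertex outside the $K_4$ is adjacent to at most one vertex of it, so $G/K_4$ is simple. The same observation shows the edges of $G$ running between the $K_4$ and its complement are in bijection with the edges of $E_*$, so exactly the six edges of $K_4$ are lost: $|E(G/K_4)| = |E(G)| - 6$. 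A direct substitution then gives $f(G/K_4) = 2(|V(G)|-3) - (|E(G)|-6) = f(G) = \ell$, establishing the tightness equation.

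The main step is to prove $(2,\ell)$-sparsity of $G/K_4$, i.e. $f(H)\geq \ell$ for every subgraph $H$ of $G/K_4$ carrying at least one edge. If $v_*\notin V(H)$ then $H$ is already a subgraph of $G$ with the same $f$-value, and sparsity of $G$ applies directly. The substantive case is $v_*\in V(H)$, which I would handle by an uncontraction argument: build $\hat H \subseteq G$ by reinserting all four vertices and six edges of the $K_4$ in place of $v_*$, and replacing each edge $vv_*$ of $H$ by the unique edge $vw$ of $G$ from which it came (uniqueness is precisely the simplicity just established). Then $|V(\hat H)| = |V(H)|+3$ and $|E(\hat H)| = |E(H)|+6$, whence $f(\hat H)=f(H)$; since $\hat H$ contains the six $K_4$-edges it is a subgraph of $G$ with at least one edge, so $f(H)=f(\hat H)\geq \ell$.

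The only delicate point is the bookkeeping in the uncontraction: I must check that the four reinserted $K_4$-vertices are genuinely distinct from the remaining vertices of $H$ (they are, because those lie outside the $K_4$ in $G$) and that the $v_*$-incident edges of $H$ correspond bijectively to honest edges $vw$ of $G$ (this is exactly where the no-$K_3$ hypothesis, through simplicity, is used to rule out a single edge $vv_*$ arising from two edges $vw_1,vw_2$). The hypothesis $|V(G)|>4$ guarantees there is at least one vertex outside the $K_4$, so the move is non-degenerate and $G/K_4$ genuinely has fewer vertices; beyond this it plays no role in the counting.
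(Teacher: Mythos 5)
Your proof is correct and follows essentially the same route as the paper's: the simplicity failure is traced to a vertex outside the $K_4$ with two neighbours inside it (yielding the forbidden $K_3$), and $(2,\ell)$-tightness is obtained by noting that contraction preserves $f$ and that any deficient subgraph of $G/K_4$ would uncontract to a deficient subgraph of $G$. The only difference is presentational: you make explicit the case split on whether $v_*$ lies in the subgraph and the bookkeeping of the uncontraction, which the paper compresses into the single assertion that $Y'=Y/K_4$ for a corresponding $Y\subset G$ with $f(Y)=f(Y')$.
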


\begin{proof}
$G/K_4$ is simple unless there is a vertex $v \in V(G) \setminus V(K_4)$ and edges $a,b \in E(G)$ with 
$a,b \in V(K_4)$. In this case $|V(K_3(v,a,b) \cap K_4)|=2$.

$f(G/K_4)=f(G)$ so $G/K_4$ is $(2,\ell)$-tight unless there is a 
$Y' \subset G/K_4$ with 
$f(Y')<l$. There is a corresponding $Y \subset G$ such that $Y' = Y/K_4$. But then $f(Y)<l$ because $f(Y)=f(Y')$
which contradicts the $(2,\ell)$-sparsity of $G$.
\end{proof}

The following lemma describes when a $K_3$-to-edge move is allowable. Note that a $(2,\ell)$-tight graph containing no 
copy of $K_3$ admits an inverse Henneberg move by Lemmas \ref{jco6} and \ref{k4lem}.

\begin{lem} \label{jco1}
Let $G$ be a $(2,\ell)$-tight graph and $G \rightarrow  G'$ a $K_3$-to-edge move in which 
the vertices $a, b \in K_3(a,b,c)$ are the vertices in $G$ which are merged. Then $G'$ is 
simple unless there is a $K_3(a,b,d)$ in $G$ with $d \neq c$ and $G'$ is $(2,\ell)$-sparse unless there is a 
$Y \subset G$ with $ab \in E(Y)$, $c \not\in V(Y)$ and $Y$ is $(2,\ell)$-tight.
\end{lem}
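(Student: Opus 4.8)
The plan is to treat the two assertions separately, in each case by tracking exactly what the inverse move (the edge-to-$K_3$ move) does to the vertex and edge counts, and then reading off the answer from $f=2|V|-|E|$.

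For simplicity, I would inspect the neighbourhood of the merged vertex $v$. After deleting the triangle edges $ab,ac,bc$ and identifying $a$ with $b$, the vertex $v$ becomes adjacent to $(N_G(a)\cup N_G(b))\setminus\{a,b,c\}$, and then the edge $cv$ is adjoined. Since $c$ is excluded from that union, $cv$ is never a repeated edge, and no loop can appear because $ab$ was deleted. Hence the only possible source of non-simplicity is a common neighbour $d\neq c$ of $a$ and $b$, which produces the parallel pair $vd$; and such a $d$ exists precisely when there is a $K_3(a,b,d)$ with $d\neq c$. This gives the first assertion.

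For sparsity I would argue contrapositively: assume some subgraph $H'\subseteq G'$ with at least one edge has $f(H')\leq \ell-1$, and produce the required tight $Y$. The central device is a pullback that un-merges $v$ into $\{a,b\}$, using the two bookkeeping facts that $|V(G')|=|V(G)|-1$, and that under the move the edges of $G'$ not meeting $v$ correspond to the edges of $G$ not meeting $a,b$, the edges $vw$ with $w\neq c$ correspond (via the edge bipartition built into the move) to single edges $aw$ or $bw$ of $G$, while the single edge $cv$ corresponds to the three edges $ca,cb,ab$. First, $H'$ must contain $v$, since otherwise $H'$ is literally a subgraph of $G$, contradicting sparsity of $G$. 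I then split on whether $c\in V(H')$. If $c\in V(H')$, I adjoin $cv$ to $H'$ if it is not already present (which only lowers $f$) and pull back, expanding $v$ to $\{a,b\}$ and the edge $cv$ to the triangle $\{ca,cb,ab\}$; a direct count gives a subgraph $\hat H\subseteq G$ with $f(\hat H)\leq\ell-1<\ell$, contradicting sparsity of $G$. Hence $c\notin V(H')$, and in that case I pull back $v$ to $\{a,b\}$ and adjoin only the single edge $ab$, so $|V|$ and $|E|$ each rise by one and $f(H)=f(H')+1\leq\ell$, while sparsity of $G$ forces $f(H)\geq\ell$. Thus $f(H)=\ell$; being a subgraph of the sparse graph $G$ it is $(2,\ell)$-tight, and by construction $ab\in E(H)$ and $c\notin V(H)$, so $Y:=H$ is exactly the claimed obstruction.

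I expect the main obstacle to be the asymmetry between the two sub-cases of the sparsity argument: one must recognise that expanding the edge $cv$ to the full triangle preserves $f$ exactly, so that a violation involving $cv$ descends to a genuine violation inside $G$, whereas excluding $c$ and adjoining only $ab$ raises $f$ by precisely one, so that a violation in $G'$ is upgraded to a \emph{tight} subgraph of $G$. Making these two counts land on $\ell-1$ and $\ell$ respectively is what drives the dichotomy, and some care is needed to check that the edge correspondence underlying the pullback is well defined even when $G'$ fails to be simple.
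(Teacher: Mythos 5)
Your proof is correct and follows essentially the same route as the paper's (much terser) argument: simplicity fails exactly when $a,b$ have a common neighbour $d\neq c$, and a sparsity violation in $G'$ is pulled back to a subgraph $Y\subset G$ containing $ab$, where the case $c\in V(Y)$ preserves $f$ and yields a contradiction with sparsity of $G$, while the case $c\notin V(Y)$ raises $f$ by exactly one and produces the tight obstruction. Your version simply makes explicit the counting and the multigraph bookkeeping that the paper leaves implicit.
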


\begin{proof}
$G'$ is simple provided there is no vertex $d$ different from $c$ and two edges $da$,
$db$. This gives the first condition.

$G'$ is $(2,\ell)$-sparse provided it has no
subgraph $Y'$ with $f(Y') < l$. $Y'$ is also a subgraph of $G$ unless it 
derives from a subgraph $Y \subset G$ with $ab \in E(Y)$ and $f(Y') < f(Y)$ only if 
$c \not\in Y$.
\end{proof}

There are three possible $K_3$-to-edge moves which can be applied to a copy of $K_3$ in $G$. If none of these results in a simple graph then there are three 
further copies of $K_3$ in $G$ and, if these are distinct, there are six further $K_3$-to-edge moves which might result in a simple graph. We will use this 
growth in the number of copies of $K_3$ to show that if $G$ contains a copy of $K_3$ then either $G$ has an allowable $K_3$-to-edge move
or every copy of $K_3$ is in a copy of $K_4$ (Lemma \ref{jco4} below). This $K_4$ gives an allowable $K_4$-to-vertex move unless it is 
adjacent to a copy of $K_3$ which, by this argument, must also be in another copy of $K_4$. This allows us to put a 
strong constraint on the possible graphs which contain a copy of $K_3$ but no allowable $K_3$-to-edge or $K_4$-to-vertex move (Lemma \ref{jco5} below). 

In order to keep track of the way in which copies of $K_3$ may share edges in a $(2,\ell)$-tight graph we first define a triangle sequence which is a set of 
nested subgraphs of $G$
and derive some of its properties. 

\begin{defn} \label{jco2}
Let $G$ be a simple graph. A triangle sequence in $G$ is a nested set of subgraphs
\[ M_3 \subset M_4 \subset \dots \subset M_i \dots \subset M_n \subseteq G\]
where $M_3$ is a copy of $K_3$, $E(M_i)$ and $V(M_i)$ are respectively the sets of edges and vertices of $M_i$, 
$|V(M_i)|=|V(M_{i-1})|+1$ and if $V(M_i) \setminus V(M_{i-1})=v_i$ then $E(M_i) \setminus E(M_{i-1})=v_ia_i,v_ib_i$ where 
$a_ib_i \in E(M_{i-1})$ and $a_ib_i$ is in exactly one copy of $K_3$ in $M_{i-1}$. We use $S(M_i)$ to denote the set of 
edges in $E(M_i)$ which are in exactly one copy of $K_3$ in $M_i$ (so $a_ib_i \in S(M_{i-1}))$.

%and $M_i$ is obtained from $M_{i-1}$ by the addition of a vertex 
%$v_i \in V(G) \setminus V(M_{i-i})$ and two edges $a_iv_i$ and $b_iv_i$ where $a_ib_i$ is an edge in $E(M_{i-1})$ which is in exactly one copy of $K_3 \in M_{i-1}$.
\end{defn}
We will often refer to a triangle sequence by the largest graph in the sequence. A maximal length triangle sequence is one which cannot 
be extended by a single vertex in $G$. We 
note that even for a maximal length triangle sequence with largest graph $M_n$ the graph $G$ may contain 
copies $K_3(a,b,c)$ which are not subgraphs of $M_n$ even though $ab \in E(M_n)$. This may occur if $c \in V(M_n)$ or if 
$c \notin V(M_n)$ and the edge $ab$ is in more than one copy of $K_3$ in $M_n$. Since $M_n$ is itself a graph we may form 
different 
triangle sequences within $M_n$ for example by starting with different copies of $K_3$ in $M_n$, see Figure \ref{johnfig}.

\begin{center}
\begin{figure}[ht]
\centering
\includegraphics[width=7cm]{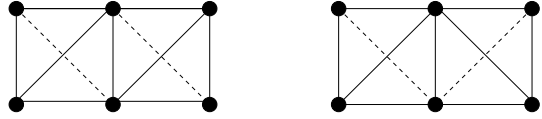}
\caption{Two possible maximal length triangle sequences in $K_4 \sqcup K_4$. In each case the graph shown is the final graph $M_6$ in the 
sequence. The dashed lines represent edges which are in $K_4 \sqcup K_4$ but not in $M_6$. The intermediate 
graphs in the sequence are obtained by starting with any copy of $K_3$ and sequentially adding one vertex and two 
edges from an adjacent copy of $K_3$}
\label{johnfig}
\end{figure}
\end{center}

\begin{lem} \label{jco3}
A triangle sequence in $G$ has the following properties.

\begin{enumerate}
\item $|V(M_i)|=i$ and $|E(M_i)| = 2i-3$.
\item The edges in $S(M_i)$ form a spanning cycle of $M_i$.
\item For every edge $ab \in E(M_i) \setminus S(M_i)$ the vertex pair $a,b$ separates $M_i$ (as a graph) with the property 
that if 
$aa_l,aa_r  \in S(M_i)$ then $a_l,a_r$ are in different separation components.
\item If $K_3(a,b,c)$ is any copy of $K_3$ in $M_n$ then there is a triangle sequence 
$M_3' \subset \dots \subset M_m'$ in $M_n$ such that $M_3'=K_3(a,b,c)$ and $M_m' = M_n$.
\end{enumerate}
\end{lem}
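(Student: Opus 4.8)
The plan is to prove all four parts by induction on $i$ (and, for (4), on $|V(M_n)|$), with the engine being a precise description of how the distinguished edge set $S$ changes under a single extension step. First I would record the key identity. Passing from $M_{i-1}$ to $M_i$ adjoins $v_i$ with the two edges $v_ia_i,v_ib_i$, so $v_i$ has degree $2$ and the only copy of $K_3$ through it is $K_3(v_i,a_i,b_i)$; hence $v_ia_i$ and $v_ib_i$ each lie in exactly one copy of $K_3$ in $M_i$, the edge $a_ib_i$ (which lay in exactly one copy of $K_3$ in $M_{i-1}$ by the definition of a triangle sequence) now lies in two, and the triangle count of every other edge is unchanged. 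Thus
\[ S(M_i)=\bigl(S(M_{i-1})\setminus\{a_ib_i\}\bigr)\cup\{v_ia_i,v_ib_i\},\qquad E(M_i)\setminus S(M_i)=\bigl(E(M_{i-1})\setminus S(M_{i-1})\bigr)\cup\{a_ib_i\}. \]
Property (1) is then immediate, since $M_3=K_3$ has $|V|=3$ and $|E|=3=2\cdot 3-3$ and each step adds one vertex and two edges. For property (2), $S(M_3)=E(K_3)$ is a $3$-cycle, and the first identity says that passing to $M_i$ deletes the cycle edge $a_ib_i$ and inserts the length-two path $a_iv_ib_i$, which turns a spanning cycle of $M_{i-1}$ into a spanning cycle of $M_i$.

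These identities also show that $M_i$ is exactly a triangulated polygon (a maximal outerplanar graph): its boundary is the Hamiltonian cycle $S(M_i)$, and each step glues a triangular ear $K_3(v_i,a_i,b_i)$ onto the boundary edge $a_ib_i$, which thereby becomes an interior diagonal. I would prove property (3) along this picture, again by induction. The freshly created diagonal $a_ib_i$ separates $M_i$, since deleting $\{a_i,b_i\}$ isolates the degree-$2$ vertex $v_i$ (and the arc subsequently built on its side) from the rest. For an older diagonal $a_jb_j$ with $j<i$, the inductive hypothesis gives a separation of $M_{i-1}$; because $a_ib_i$ is a boundary edge of $M_{i-1}$, its two endpoints lie in a single one of the two sides of that separation (or one endpoint is $a_j$ or $b_j$), so attaching the degree-$2$ vertex $v_i$ cannot bridge the two sides, and the separation survives in $M_i$. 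The supplementary claim — that the two cycle edges $aa_l,aa_r\in S(M_i)$ incident to an endpoint $a$ of a diagonal land in different components — is the observation that $a_l,a_r$ are the two boundary neighbours of $a$ and hence lie on the two distinct arcs cut off by the diagonal through $a$.

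Finally, for property (4) I would read off a triangle sequence from the dual tree of the triangulated polygon $M_n$, whose nodes are the triangles of $M_n$, whose edges are its diagonals, and which is a tree. Rooting this tree at the prescribed triangle $K_3(a,b,c)$ and repeatedly deleting a non-root leaf (which exists whenever the tree has more than one node) peels $M_n$ down to $K_3(a,b,c)$ one ear at a time; each deletion removes a degree-$2$ boundary vertex together with its two incident edges, which is precisely the inverse of a triangle-sequence step and keeps the vertices $a,b,c$ throughout. Reversing this order yields a triangle sequence $M_3'=K_3(a,b,c)\subset\cdots\subset M_m'=M_n$, as required.

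I expect the main obstacle to be the bookkeeping in the inductive step for property (3): one must verify that adjoining the new vertex $v_i$ does not reconnect components that an earlier diagonal had separated, which forces a case analysis according to whether an endpoint of the new boundary edge $a_ib_i$ coincides with an endpoint of the older diagonal $a_jb_j$. The only other point needing care is the justification, for property (4), that the dual graph is genuinely a tree (equivalently, that a triangulated polygon on at least four vertices has at least two ears), after which the leaf-peeling argument is routine.
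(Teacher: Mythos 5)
Your proof is correct, and for parts (1)--(3) it is essentially the paper's own argument: the same induction along the sequence, driven by the update rule $S(M_i)=(S(M_{i-1})\setminus\{a_ib_i\})\cup\{v_ia_i,v_ib_i\}$ (which the paper establishes inside its proof of (2)), the same cycle surgery for (2), and the same bookkeeping for (3), where the paper likewise places $v_i$ in the separation component containing the adjacent pair $a_i,b_i$ and then treats the new pair $\{a_i,b_i\}$ separately. Part (4) is where you genuinely diverge. The paper stays inside its own formalism: it takes a \emph{maximal} triangle sequence in $M_n$ starting at $K_3(a,b,c)$ with largest graph $M_m'$, observes that if $m<n$ then (a cycle having no proper subcycle) there is an edge $a_mb_m\in S(M_m')\setminus S(M_n)$, so $a_mb_m$ lies in a second triangle $K_3(a_m,b_m,v_m)$ of $M_n$ with $v_m\notin V(M_m')$, and this triangle would extend the sequence, contradicting maximality. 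You instead identify each $M_i$ with a triangulated polygon, form the dual tree of $M_n$, root it at $K_3(a,b,c)$, and peel non-root leaves (ears), reversing the peeling to get the sequence. Both routes work, and the trade-off is real: yours costs extra machinery --- you must prove that triangle-sequence graphs are maximal outerplanar, that every copy of $K_3$ in such a graph is a face (so the dual tree accounts for all triangles), that the dual graph is a tree, and the small check that a non-root ear's apex is never one of $a,b,c$ (if it were, its two incident edges would have to be the surviving root edges, making the ear the root itself) --- whereas the paper's argument is shorter given (1) and (2). What yours buys is that it is constructive and it supplies explicitly the one inference the paper essentially asserts rather than proves, namely why $v_m$ must lie outside $V(M_m')$ rather than being a vertex of $M_m'$ attached by chords: in your picture the diagonal $a_mb_m$ splits $M_n$ into two sub-polygons, the connected union of faces $M_m'$ lies entirely in one of them, and $v_m$ lies strictly in the other. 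So your dual-tree route is not only valid but patches the tersest step of the paper's proof.
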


\begin{proof}
Property $(1)$ follows by induction since $|V(M_i)|=|V(M_{i-1})|+1$ and $|E(M_i)|=|E(M_{i-1})|+2$.

Property $(2)$ follows by induction. The edges of $S(M_3)$ form a spanning cycle of $M_3$. Assume 
property $(2)$  is true for $M_{i-1}$. Let $V(M_i)=V(M_{i-1}),v_i$ and let $E(M_i)=E(M_{i-1}),v_ia_i,v_ib_i$. The edge $a_ib_i \in S(M_{i-1})$ is in $K_3(a_i,b_i,v_i) \subset M_i$ 
in addition to a copy of $K_3$ in $M_{i-1}$ so is not in $S(M_i)$. The two edges $a_iv_i$ and 
$b_iv_i$ are both in $K_3(a_i,b_i,v_i)$ (and in no other copy of $K_3$ in $M_i$) so these are in 
$S(M_i)$. If the edges in $S(M_{i-1})$ form a spanning cycle $C_{i-1}$ of $M_{i-1}$ then the cycle
\[ C_i = (C_{i-1} \setminus a_ib_i) \cup a_iv_i \cup b_iv_i\]
forms a spanning cycle of $M_i$.

Property $(3)$ is also proved by induction. It is trivially true for $M_3$. Assume it is true for any $M_{i-1}$.
Let $V(M_i)=V(M_{i-1}),v_i$ and let $E(M_i)=E(M_{i-1}),v_ia_i,v_ib_i$. Every vertex pair which separates $M_{i-1}$ also 
separates $M_i$ with the same components because the vertices $a,b$ are adjacent and so are in the same component of any 
vertex separation of $M_{i-1}$. Putting vertex $v_i$ in this component gives a corresponding vertex separation of $M_i$. 
The edge $ab$ is the only edge which is in $E(M_i) \setminus S(M_i)$ and not in $E(M_{i-1}) \setminus S(M_{i-1})$. The 
vertex pair $a,b$ separates the vertex $v_i$ from the vertices $V(M_{i-1}) \setminus a,b$. The neighbours of $a$ in 
$S(M_i)$ are $v_i$ and a vertex $a_l \neq b \in V(M_{i-1}) \setminus a,b$ and these are separated by $a,b$.

For property $(4)$ we will show there is a triangle sequence in $M_n$ starting with $K_3(a,b,c)$ and 
terminating with $M_m'$ for
which $m=n$. This implies $|E(M_m')|=|E(M_n)|$ and that $M_m'=M_n$. Let $M_3',M_4' \dots M_m'$ be a
maximal length triangle sequence in $M_n$ starting with $K_3(a,b,c)$. Suppose for a contradiction that $m < n$. The edges in  
$S(M_m')$ form a spanning cycle 
of $M_m'$ and there is a edge $a_mb_m$ in $S(M_m')$ which is not in $S(M_n)$
(since $m<n$ and a cycle contains no proper subcycles). Since $a_mb_m$ is in $E(M_n)$ but not in $S(M_n)$ there is a vertex $v_m$ in $V(M_n) \setminus V(M_m')$ such that
there is $K_3(a_m,b_m,v_m)$ which in is in $M_n$ and not in $M_m'$. The edge $a_mb_m$ is therefore in a subgraph
$K_3(a_m,b_m,v_m)$ of $M_n$ but is not in $M_m'$. This implies that 
$v_m \in V(M_n) \setminus V(M_m')$ and $a_mv_m, b_mv_m \in E(M_n) \setminus E(M_m')$
with $a_mb_m \in S(M_m')$. This contradicts the maximality of the triangle sequence
in $M_n$.
\end{proof}

The following lemma uses a maximum length triangle sequence to show that if $G$ has a copy of $K_3$ but does not have a 
$K_3$-to-edge move then every edge in a copy of $K_3$ in $G$ is in at least two copies of $K_3$ in $G$.

\begin{lem} \label{jco4a}
Let $G$ be a $(2,\ell)$-tight graph for $l=1,2$ containing a copy of $K_3$. Then either
\begin{enumerate}
\item[$(i)$] there is a $K_3$-to-edge move in $G$ which gives a $(2,l)$-tight graph or
\item[$(ii)$] every edge in a copy of $K_3$ in $G$ is in at least two copies of $K_3$ in $G$.
\end{enumerate}
\end{lem}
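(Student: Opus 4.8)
The plan is to prove the contrapositive: assuming that neither $(i)$ nor $(ii)$ holds, I will derive a contradiction. Since $(ii)$ fails there is an edge $ab$ lying in a unique copy of $K_3$, say $K_3(a,b,c)$. By Lemma \ref{jco1} the $K_3$-to-edge move merging $a$ and $b$ is then simple (there is no second common neighbour of $a$ and $b$), so because $(i)$ fails this move must violate sparsity, and Lemma \ref{jco1} supplies a $(2,\ell)$-tight subgraph $Y$ with $ab\in E(Y)$ and $c\notin V(Y)$. Throughout I will use two standard consequences of $(2,\ell)$-sparsity: first, every $(2,\ell)$-tight subgraph is \emph{induced}, since adjoining a missing edge between two of its vertices would drop $f$ to $\ell-1$; second, if two $(2,\ell)$-tight subgraphs share a vertex then, by the inclusion--exclusion identity $f(A\cup B)=f(A)+f(B)-f(A\cap B)$ together with sparsity, their union is again $(2,\ell)$-tight, and for $\ell=1$ they must in fact share an edge or an immediate contradiction already results.

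The first reduction is a direct count. If the apex $c$ had a third neighbour inside $V(Y)$ then, since $Y$ is induced, the induced subgraph on $V(Y)\cup\{c\}$ would contain at least three edges at $c$ and hence satisfy $f\le \ell-1$, contradicting sparsity; thus $c$ has exactly the two neighbours $a,b$ in $Y$. I now bring in the combinatorics of triangles. Take a maximal triangle sequence $K_3(a,b,c)=M_3\subset\cdots\subset M_n$ as in Definition \ref{jco2}. Because $ab$ lies in only one copy of $K_3$ in $G$ it can never be the filled edge $a_ib_i$ at any stage (filling it would place $ab$ in a second triangle), so $ab\in S(M_n)$; by Lemma \ref{jco3} the set $S(M_n)$ is a spanning cycle $C$ of $M_n$, and the endpoints of every chord of $M_n$ form a separating pair with property $(3)$. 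As $C$ is spanning, the apex $c$ lies on $C$, and its two cycle-neighbours, being neighbours of $c$, cannot lie in $Y$ by the previous count; hence $Y$ avoids three consecutive vertices of $C$ around $c$ (the degenerate situations in which a spoke $ac$ or $bc$ is itself a cycle edge are treated separately).

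The contradiction is then produced by assembling tight subgraphs around the cycle. Because $(i)$ fails, every \emph{simple} $K_3$-to-edge move on a cycle edge yields, exactly as above, a $(2,\ell)$-tight obstruction omitting its apex, and such interlocking pieces, glued along shared cycle vertices, remain $(2,\ell)$-tight by the union fact. The aim is to build from $Y$ and these neighbouring obstructions a single $(2,\ell)$-tight subgraph that contains both endpoints of some edge of $G$ while omitting that edge, which is impossible for an induced subgraph. The prototype is $M_n=M_3$: there the obstructions $Y_{ab}$ and $Y_{bc}$ share the vertex $b$, so $Y_{ab}\cup Y_{bc}$ is $(2,\ell)$-tight and contains $a$ and $c$, yet it omits the edge $ca$ (since $Y_{ab}$ omits $c$ and $Y_{bc}$ omits $a$), contradicting inducedness.

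The main obstacle is exactly this last assembly in the general case. Two features make it delicate: $M_n$ need not be an induced subgraph of $G$, and an edge of $S(M_n)$ need not lie in a unique copy of $K_3$ in $G$ (its second apex may already be a vertex of $M_n$), so not every cycle edge supplies a usable obstruction. Controlling precisely which cycle vertices and edges $Y$ may contain, and using the chord-separation property of Lemma \ref{jco3}$(3)$ together with the cyclic wrap-around of $C$ to force an omitted edge between two present vertices, is the crux of the argument.
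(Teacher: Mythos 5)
Your opening moves coincide with the paper's: negate both alternatives, use Lemma \ref{jco1} to extract a $(2,\ell)$-tight obstruction $Y$ with $ab\in E(Y)$ and $c\notin V(Y)$, and place $ab$ on the spanning cycle $S(M_n)$ of a maximal triangle sequence via Lemma \ref{jco3}. The auxiliary facts you establish are correct (tight subgraphs are induced; the apex $c$ has exactly two neighbours in $V(Y)$; unions of tight subgraphs meeting in a vertex behave as you say), and your prototype $M_n=M_3$ is a valid special case. But the proof stops exactly where the lemma's content lies: you name the ``crux'' --- controlling which vertices of $M_n$ the obstruction $Y$ may contain, and handling cycle edges whose second triangle has its apex \emph{inside} $V(M_n)$ --- and then do not carry it out; the degenerate cases ($ac$ or $bc$ a cycle edge) are likewise deferred and never treated. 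This is a genuine gap, not a bookkeeping detail. In the hardest case, $ab$ is the \emph{only} edge of $S(M_n)$ lying in a unique copy of $K_3$ in $G$; then every other cycle edge lies in a second triangle whose apex belongs to $V(M_n)$ (by maximality of the sequence), so Lemma \ref{jco1} produces no obstruction for it and your gluing scheme has nothing to glue. Conversely, without global control of $V(M_n)\cap V(Y)$ you cannot rule out that $Y$ swallows most of the cycle, which defeats the plan of forcing an omitted edge between two vertices of the union.

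For comparison, the paper closes precisely this gap in two steps. First, re-rooting the triangle sequence at $K_3(a,b,c)$ using Lemma \ref{jco3}(4), it proves by induction along the sequence that $V(M_n)\cap V(Y)=\{a,b\}$ and $f(Y\cup M_n)=\ell$: if the newly added vertex $v_i$ lay in $V(Y)$, then $f(Y\cup v_ia_i\cup v_ib_i)=\ell-2$, violating sparsity. Second, it splits on how many cycle edges lie in unique triangles: if $ab$ is the only one, each other cycle edge $cd$ has a second triangle $K_3(c,d,v)$ with $v\in V(M_n)$, and then $f(Y\cup M_n\cup cv)=\ell-1$ is the contradiction; if there are two such edges with obstructions $Y_1,Y_2$, a direct count gives $f(Y_1\cup Y_2\cup M_n)\leq 2\ell-3$ (or $=\ell-1$ when the four endpoints are not distinct), again a contradiction. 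Your wrap-around idea might be completable, but it would require proving an analogue of this intersection control, which is the actual work of the lemma.
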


\begin{proof}
Suppose that an edge $e=ab \in E(G)$ is in exactly one copy $K_3(a,b,c) \subset G$. By Lemma \ref{jco1}, the $K_3$-to-edge 
move which merges vertices $a$ and $b$ gives a simple graph $G^\prime$ and $G^\prime$ is $(2,\ell)$-tight unless $ab$ and 
$c$
have the following property (P): there is a
$(2,\ell)$-tight subgraph $Y \subset G$ with $ab \in E(Y)$ and $c \notin V(Y)$.

Suppose for a contradiction to the lemma that every edge in $G$ which is in exactly one copy of $K_3$ satisfies this 
property.

Let $M_3 \subset M_4 \dots \subset M_i \dots \subset M_n \subset G$ 
be a maximal length triangle sequence in $G$. Every edge in $E(M_n) \setminus S(M_n)$ is in two copies of $K_3$.

Suppose there is exactly one edge $ab$ in $S(M_n)$ which is in exactly one copy of $K_3$ in $G$ and therefore satisfies 
property (P) with corresponding subgraph $Y$. 
We will show by induction that 
$V(M_n) \cap V(Y) = \{a,b\}$ and $f(Y \cup M_n) = \ell$. Since $ab \in S(M_n)$ there is a vertex $c$ in $V(M_n)$ such that 
$K_3(a,b,c)$ is in $M_n$. 
By property $(4)$ of Lemma \ref{jco3} 
there is a triangle sequence $M_3' \subset \dots M_i' \dots \subset M_n$, starting with 
$M_3' = K_3(a,b,c)$ and ending with $M_n$.
$V(M_3') \cap V(Y) = \{a,b\}$ and $f(Y \cup M_3') = \ell$.
Assume for the induction that $V(M_{i-1}) \cap V(Y) = \{a,b\}$ and that $f(Y \cup M_{i-1}) = \ell$. 
Let $V(M_i) \setminus V(M_{i-1})=v_i$. If
$v_i \in V(Y)$ then
\[f(Y \cup v_ia_i \cup v_ib_i) = \ell-2\]
which contradicts the $(2,\ell)$-sparsity of $G$. Thus $V(M_i) \cap V(Y) = \{a,b\}$ and $f(Y \cup M_i) = \ell$. 

Every edge $cd$ in $S(M_n) \setminus ab$ is in a subgraph $K_3(c,d,v)$ of $G$ 
where $K_3(c,d,v)$ is not a subgraph of $M_n$.
Since $M_n$ is the largest graph in a maximal length triangle sequence we must have $v \in V(M_n)$ else $M_n$ could be 
extended to include $v$. But then 
$f(Y \cup M_n \cup cv)=\ell-1$ and since $Y \cup M_n \cup cv$ is a subgraph of $G$ this contradicts the 
$(2,\ell)$-sparsity of $G$.

Suppose there is more than one edge in $S(M_n)$ which is in exactly one copy of $K_3$ in $G$.
There are subgraphs $Y_1$ and $Y_2$ 
and edges $a_1b_1 \in Y_1 \cap M_n$ and $a_2b_2 \in Y_2 \cap M_n$. If the vertices 
$a_1,b_1,a_2,b_2$ are distinct then
\[f(Y_1  \cup Y_2 \cup M_n) \leq 2\ell-3\]
because $f(Y_1 \cup Y_2) \leq 2l$ and there are $n-4$ vertices and $2n-5$ edges in $M_n$ which are not in 
$Y_1 \cup Y_2$. If two of the vertices $a_1,b_1,a_2,b_2$ are the same then
\[f(Y_1  \cup Y_2 \cup M_n) = \ell-1\]
since $f(Y_1 \cup Y_2) = \ell$ and there are $n-3$ vertices and 
$2n-5$ edges in $M_n$ which are not in $Y_1 \cup Y_2$. In either case this contradicts the $(2,\ell)$-sparsity of $G$ for 
$l=1,2$.
\end{proof}

We say that an edge $ab \in E(G)$ is a $chord$ of $M_n$ if $a,b \in V(M_n)$ and $e \notin E(M_n)$. Let 
$[M_n]$ denote the graph induced in $G$ by $V(M_n)$. Then 
$E([M_n]) \setminus E(M_n)$ is the set of chords of $M_n$. The set $C$ defined in the next lemma is the set of edges in 
$S(M_n)$ which are in two or more copies of $K_3$ in $[M_n]$. We will show that when $M_n$ is the largest subgraph in a 
maximal length triangle sequence this is the same as a set of edges in $S(M_n)$ which are in two or more copies of $K_3$ in 
$G$. This lemma can then be used to limit the length of a triangle sequence because the number of chords of $M_n$ is 
limited to one for $\ell=2$ and to two for $\ell=1$ by the $(2,\ell)$-sparsity of $G$. 

We use the notation 
$\cup_{i=1}^m A_i$ to denote $A_1 \cup A_2,\dots,\cup A_m$ where $A_i$ are sets or graphs.

\begin{lem} \label{jco10}
Let $G$ be graph and let $M_n$ be a subgraph in a triangle sequence in $G$ with $n > 4$.
Let $e_1,\dots,e_m$ for $m > 0$ be chords of $M_n$, 
%let $H=E(M_n)\cup_{i=1}^m e_i$ and 
let $C_i=\{f \in S(M_n) : \exists g \in E([M_n])$ such that 
$K_3(e_i,f,g) \subset G\}$. Then $|C| \leq 3m$ where $C=\cup_{i=1}^m C_i$.
\end{lem}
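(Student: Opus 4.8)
The plan is to read off the combinatorial shape of $M_n$ from Lemma \ref{jco3}, analyse a single chord locally, and then assemble a global count in which the separation property of triangle sequences does the decisive work.

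First I would use Lemma \ref{jco3}(2) to write $S(M_n)=u_1u_2\cdots u_nu_1$ as a spanning cycle, so that the $n-3$ edges of $E(M_n)\setminus S(M_n)$ are ``diagonals''. Fix a chord $e_i=u_pu_q$. Any triangle $K_3(e_i,f,g)\subset G$ has vertex set $\{u_p,u_q,z\}$, so $\{f,g\}=\{u_pz,u_qz\}$; since $f\in S(M_n)\subseteq E(M_n)$ we get $z\in V(M_n)$ and $f$ is a boundary edge at $u_p$ or at $u_q$. Hence $C_i$ is contained in the four edges $u_{p-1}u_p,\,u_pu_{p+1},\,u_{q-1}u_q,\,u_qu_{q+1}$, and each of these lies in $C_i$ exactly when its forced ``partner'' $g$ --- respectively $u_qu_{p-1},\,u_qu_{p+1},\,u_pu_{q-1},\,u_pu_{q+1}$ --- lies in $E(G)$. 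In particular $|C_i|\le 4$.

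The key local step is that at most three of these four partner edges can lie in $E(M_n)$. Consider the pair $u_qu_{p+1}$ and $u_pu_{q-1}$ and suppose $q-p\ge 3$, so that $u_qu_{p+1}$, if in $E(M_n)$, is a diagonal. By Lemma \ref{jco3}(3) the pair $\{u_{p+1},u_q\}$ separates $M_n$ with the $S(M_n)$-neighbours $u_p,u_{p+2}$ of $u_{p+1}$ in distinct components; following the cycle, $u_p$ and $u_{q-1}$ then lie in distinct components, so $u_pu_{q-1}\notin E(M_n)$. Thus at most one of this pair is in $E(M_n)$; symmetrically, when $n-(q-p)\ge 3$, at most one of $u_qu_{p-1},\,u_pu_{q+1}$ is. The only way to have two boundary partners on one side is $q-p=2$, but $q-p=n-(q-p)=2$ needs $n=4$, excluded by $n>4$. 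So at most three partner edges lie in $E(M_n)$. Call a member of $C_i$ an \emph{internal witness} of $e_i$ when its partner lies in $E(M_n)$; then each chord has at most three internal witnesses, and its possible fourth member has partner $g$ equal to some other chord $e_j$, whence that edge lies in $C_j$ as well and is \emph{shared}.

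It remains to pass from ``at most three internal witnesses per chord'' to $|C|\le 3m$. If every edge of $C$ were an internal witness of some chord, then $C$ would be a union of $m$ sets of size $\le 3$ and we would be done; the difficulty is exactly the edges whose only witnessing triangles are \emph{double chords} (both $u_pz,u_qz$ chords). Writing $C$ by inclusion--exclusion against the internal-witness sets, the surplus contributed by a chord that attains all four members --- three internal witnesses together with one shared edge, which forces $q-p=2$ or $n-(q-p)=2$ --- must be cancelled by overlaps. The crucial structural point, and the observation I would build the finish on, is that such a \emph{full} chord cannot have a full partner: the internal witnesses demanded simultaneously of $e_i=u_pu_{p+2}$ and of its sharing partner $e_j=u_{p-1}u_{p+2}$ would require the two diagonals $u_pu_{p+3}$ and $u_{p-1}u_{p+1}$ of $M_n$, which cross in the cyclic order and so cannot both lie in the non-crossing graph $M_n$ (again by Lemma \ref{jco3}(3)). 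Hence each full chord is paired with a chord carrying a surplus (an inactive slot or further sharing), and a discharging along these sharing pairs converts the surpluses into the needed cancellations. The hard part --- the main obstacle --- is to organise this discharging globally: the sharing relation can link chords into chains in which the elementary per-pair surplus is only marginally enough, so keeping the running total at $3m$ requires a careful amortised count rather than a purely local one, and this is where the hypothesis $n>4$ and the separation property are used most delicately.
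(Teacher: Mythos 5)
Your local analysis is correct and reproduces two ingredients that the paper also uses: every member of $C_i$ is one of the four edges of $S(M_n)$ incident to an endpoint of $e_i$ (so $|C_i|\le 4$), and the separation property of Lemma \ref{jco3}(3) shows that at most three of the four partner edges can lie in $E(M_n)$, i.e.\ each chord has at most three ``internal witnesses''. The gap is the final paragraph: the passage from these per-chord facts to $|C|\le 3m$ is the actual content of the lemma, and you do not carry it out --- you assert that ``a discharging along these sharing pairs converts the surpluses into the needed cancellations'' and then concede that organising this discharging globally is the main obstacle. This is not a routine bookkeeping step that can be waved at: the facts you establish provably cannot yield the bound by themselves. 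Write $W$ for the set of edges of $C$ that are internal witnesses of at least one chord and $D=C\setminus W$. Your facts give $|W|\le 3m$, $\sum_{i=1}^m |C_i|\le 4m$, and that every edge of $D$ lies in at least two of the sets $C_i$, hence $|W|+2|D|\le 4m$. These constraints permit $|W|=3m$, $|D|=m/2$, i.e.\ $|C|=7m/2$, so a genuinely new global input is needed. Your ``full chord cannot share with a full chord'' observation treats only the $3{+}1$ configuration and says nothing about long chords with two shared members, chords sharing endpoints, or chains of sharing --- exactly the situations you admit you cannot yet control.

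The paper supplies precisely the missing global device, and it is worth seeing how it sidesteps your difficulty. It inducts on $m$ and first counts components of the graph formed by the chords themselves: if that graph has $t$ vertices and $c$ components, then $t\le m+c$, and since every member of every $C_i$ is a cycle edge incident to a chord endpoint, $|C|\le 2t\le 2(m+c)$, which is already $\le 3m$ whenever $c\le m/2$. Otherwise some component is a single chord $e_m$ sharing no vertex with any other chord; for such a chord no partner can be a chord (a partner shares a vertex with $e_m$), so all members of $C_m$ are internal witnesses and your own three-witness bound gives $|C_m|\le 3$; induction on the remaining $m-1$ chords then finishes. In other words, the densely clustered case that defeats your discharging is dispatched by the crude bound $|C|\le 2(m+c)$, and your sharp local bound is only ever needed for an isolated chord, where sharing cannot occur. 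One further caution, affecting your argument (and implicit in the paper's): your sharing step assumes the partner chord $g$ of a non-internal member is among the listed chords $e_1,\dots,e_m$. If the list omits some chord of $M_n$ this fails, and the stated bound itself can fail (a single chord can have $|C_1|=4$ when unlisted chords supply partners), so the lemma must be read, as it is applied in Lemma \ref{jco4}, with $e_1,\dots,e_m$ the full set of chords of $M_n$.
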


\begin{proof}
Assume for induction that the lemma is true for all possible choices of $m-1$ chords of $M_n$ and suppose that
$e_1,\dots,e_m$ are a set of $m$ chords of $M_n$. 
 
Suppose the chords $e_1,\dots,e_m$ determine a graph with $t$ distinct vertices and $c$ connected components. Then 
$t \leq m+c$. Since the edges in $S(M_n)$ form a spanning cycle of $M_n$ each vertex of a 
chord $e_i$ is incident to two edges in $S(M_n)$. This implies $|C| \leq 2(m+c)$ which implies $|C| \leq 3m$ unless $c > m/2$. We may assume therefore that 
there is at least one component with exactly one 
edge which we label as the edge $e_m$ where $e_m$ has no vertices in common with $e_i,i=1,\dots,m-1$.

Let $e_m=ab$ with $a,b \in V(M_n)$. Each of the vertices $a,b$ is 
incident to exactly two edges in $S(M_n)$ which we label $aa_l,aa_r,bb_l,bb_r \in S(M_n)$. These 
edges are all distinct because $ab \notin S(M_n)$ . Since the edges in $S(M_n)$ form a cycle we may label the vertices so that there is a 
(possibly trivial) path $P(a_l,b_l) \in S(M_n)$ which connects $a_l,b_l$ and avoids $a,b,a_r,b_r$ and then 
$a_r \neq b_l$ and $a_l \neq b_r$, see Figure \ref{lemma3.8}. We may also label so that $a_r \neq b_r$ since if $a_l=b_l$ and 
$a_r=b_r$ the edges $aa_l,a_lb,ba_r,a_ra$ form a 4-cycle in $S(M_n)$ which contradicts $n > 4$.

\begin{center}
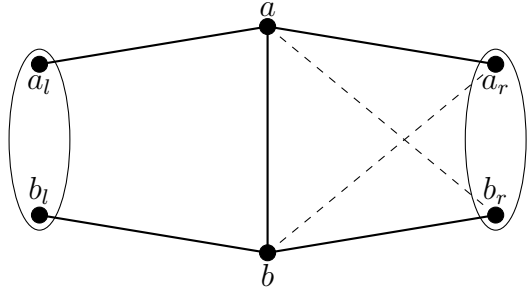
\begin{figure}[ht]
\centering
\begin{tikzpicture}
 \filldraw[black] 
(0,0) circle (3pt)node[anchor=south]{$b_l$}
(0,2) circle (3pt)node[anchor=north]{$a_l$}
(3,-.5) circle (3pt)node[anchor=north]{$b$}
(3,2.5) circle (3pt)node[anchor=south]{$a$}
(6,0) circle (3pt)node[anchor=south]{$b_r$}
(6,2) circle (3pt)node[anchor=north]{$a_r$};
 \draw[black,thick]
  (0,0) -- (3,-.5) -- (3,2.5) -- (0,2);
\draw[black,thick] (3,2.5) -- (6,2);
\draw[black,thick] (6,0) -- (3,-.5);
\draw[black] (0,1) ellipse (0.4 and 1.2);
\draw[black] (6,1) ellipse (0.4 and 1.2);
\draw[black,dashed] (3,2.5) -- (6,0);
\draw[black,dashed] (3,-.5) -- (6,2);
\end{tikzpicture}
\caption{A chord $ab$ of $M_n$ with $ab\in V(M_n)$ and adjacent to edges $aa_l, aa_r,bb_l, bb_r\in S(M_n)$. Edges $ab_r$ and $ba_r$ cannot both be in $E(M_n)$ because the vertex pair $b,a_r$ must then separate $M_n$.}
\label{lemma3.8}
\end{figure}
\end{center}

Any edge $f \in S(M_n)$ which is in a 3-cycle with $ab$ has a vertex in common with $ab$. Given edges $e,f$ there is at most one 
3-cycle in $G$ which includes $e,f$. Thus we have shown $|C_m| \leq 4$. Furthermore, if $|C_m|=4$ the vertex triples $a,b,a_r$ and $a,b,b_r$ must 
both induce 3-cycles in $[M_n]$. This implies $ba_r, ab_r \in E(M_n)$ because edges in $e_1,\dots,e_{m-1}$ have no vertices in common with $ab$. This contradicts 
Lemma \ref{jco3} part (3) for the vertex pair $b,a_r$ because 
the neighbours of $b$ in $S(M_n)$ are $b_l,b_r$ and there would be a path $b_ra,aa_l,P(a_l,b_l)$ which connects $b_r$ and $b_l$ and excludes $b,a_r$. 
Thus $|C_m| \leq 3$ which combines with the induction hypothesis $\cup_{i=1}^{m-1} C_i \leq 3(m-1)$ to give $\cup_{i=1}^m C_i \leq 3m$ .
\end{proof}

\begin{lem} \label{jco4}
Let $G$ be a $(2,\ell)$-tight graph for $\ell=1,2$ with the property that 
every edge $ab$ in a $K_3(a,b,c) \subset G$ is in at least two copies of $K_3$ in $G$.
Then every copy of $K_3$ in $G$ is in a copy of $K_4$.
\end{lem}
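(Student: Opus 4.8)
The plan is to fix an arbitrary copy $K_3(a,b,c)$ of $K_3$ in $G$ and show it lies in a copy of $K_4$. Starting from $M_3 = K_3(a,b,c)$ I would extend greedily to a maximal length triangle sequence $M_3 \subset \dots \subset M_n$. Since $M_n$ is itself a graph and $K_3(a,b,c) \subseteq M_n \subseteq [M_n] \subseteq G$, it suffices to understand the induced subgraph $[M_n]$ and to locate a copy of $K_4$ through $K_3(a,b,c)$ inside it. The whole argument then rests on bounding the integer $n$.

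The key estimate comes from combining the hypothesis with maximality. By Lemma \ref{jco3}(2) the set $S(M_n)$ is a spanning cycle of $M_n$, so $|S(M_n)| = n$, and each edge $xy \in S(M_n)$ lies in exactly one copy of $K_3$ in $M_n$. By hypothesis $xy$ lies in at least two copies of $K_3$ in $G$, so there is a further copy $K_3(x,y,w) \subset G$; maximality forces $w \in V(M_n)$ (otherwise $M_n$ could be extended across $xy$), and since $K_3(x,y,w)$ is not a copy of $K_3$ in $M_n$ at least one of $xw, yw$ is a chord of $M_n$. Hence every edge of $S(M_n)$ lies in the set $C$ of Lemma \ref{jco10}, giving $n = |S(M_n)| \le |C|$. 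For $n > 4$, Lemma \ref{jco10} yields $|C| \le 3m$, where $m$ is the number of chords of $M_n$, while $(2,\ell)$-sparsity applied to $[M_n]$ (which has $2n-3+m$ edges) gives $f([M_n]) = 3-m \ge \ell$, i.e. $m \le 3 - \ell$. Therefore $n \le 3m \le 9 - 3\ell$: for $\ell = 2$ this contradicts $n > 4$, so $n \le 4$, and for $\ell = 1$ we obtain $n \le 6$.

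It then remains to treat the few small cases. First, $n = 3$ cannot occur: if $M_3 = K_3(a,b,c)$ were maximal then each of its three edges would need a second copy of $K_3$ with apex in $V(M_3) = \{a,b,c\}$, which is impossible, so $n \ge 4$. When $n = 4$ the graph $M_4$ consists of two triangles sharing an edge, that is $K_4$ minus one edge on $\{a,b,c,v_4\}$; applying the hypothesis and maximality to an edge of $S(M_4)$ exactly as above forces the missing edge to lie in $G$, so $[M_4] = K_4$ and $K_3(a,b,c)$ lies in it. This already settles $\ell = 2$. For $\ell = 1$ with $n \in \{5,6\}$ the inequalities $n \le 3m$ and $m \le 2$ force $m = 2$, so $[M_n]$ has $2n-1$ edges and $f([M_n]) = 1$; thus $[M_n]$ is itself $(2,1)$-tight, and I would argue that it must be $K_5 \setminus e$ (when $n=5$) or $K_4 \sqcup K_4$ (when $n=6$), in each of which every copy of $K_3$, and in particular $K_3(a,b,c)$, lies in a copy of $K_4$.

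I expect the last step to be the main obstacle: pinning down $[M_n]$ for $\ell = 1$ and $n \in \{5,6\}$. For $n=5$ this is easy, since $K_5 \setminus e$ is the only simple graph on five vertices with nine edges. For $n=6$, however, edge counting alone only shows that $[M_n]$ is some $(2,1)$-tight graph on six vertices with eleven edges, and there are others (for instance an octahedron with one edge deleted, which contains triangles lying in no $K_4$). Ruling these out requires using the full strength of the hypothesis, namely that every edge of $S(M_6)$ is in a second copy of $K_3$ with apex in $V(M_6)$, together with the cycle structure of $S(M_6)$ and the separation property of Lemma \ref{jco3}(3), to show that the two chord-triangles must assemble into the required pair of copies of $K_4$ sharing an edge.
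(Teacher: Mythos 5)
Your proposal is correct and follows essentially the same route as the paper's own proof: a maximal triangle sequence through the given copy of $K_3$, the observation that maximality forces every edge of $S(M_n)$ into the chord set $C$ of Lemma \ref{jco10}, the bounds $n \le |C| \le 3m$ and $m \le 3-\ell$, and the terminal configurations $K_4$, $K_5 \setminus e$ and $K_4 \sqcup K_4$. The step you flag as the main obstacle --- pinning down $[M_6] = K_4 \sqcup K_4$ when $\ell = 1$, $n = 6$ --- is precisely the step the paper itself dispatches with an unexplained ``an analysis\dots shows'', so you are not missing anything the published proof supplies; indeed your counting argument that $[M_5]$ must be $K_5\setminus e$, and your octahedron-minus-an-edge example showing that $(2,1)$-tightness and edge count alone cannot settle the $n=6$ case, are sharper than what is printed.
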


\begin{proof}
We will show first that every maximal length triangle sequence in $G$ with largest graph $M_n$ satisfies $n \leq 6$ for $\ell=1$ and $n \leq 4$ 
for $\ell=2$.

Since every edge $ab \in S(M_n)$ is in exactly one copy of $K_3$ in $M_n$ there is a vertex $c \in V(G)$ such that $K_3(a,b,c) \subset G$ and 
$K_3(a,b,c) \not\subset M_n$. This implies that $c \in V(M_n)$ because otherwise the triangle sequence can be extended with vertex $c$. Since 
$K_3(a,b,c) \not\subset M_n$ either $ac$ or $bc$ is a chord of $M_n$. Every edge in $S(M_n)$ must therefore be in the set $C$ defined in Lemma 
\ref{jco10} and if $n>4$ by Lemma \ref{jco10} we have $n =|C| \leq 3m$ where $m$ is the number of chords of $M_n$ in $G$. 
$f(M_n \cup_{i=1}^m e_i)=3-m$ because $f(M_n)=3$ so 
$f(M_n \cup_{i=1}^m e_i) \ge \ell$ implies $m \leq 3-\ell$ and $n \leq 3(3-\ell)$. These imply $n \leq 4$ for $\ell=2$ and 
$n \leq 6$ for $\ell=1$.
 
For $n=4$ there is a unique largest graph $M_4$ and a unique edge from $E([M_n]) \setminus E(M_n)$ which can be added to 
the graph $M_4$ so that every edge of $S(M_4)$ is in two copies of $K_3$. This creates a copy of $K_4$.

An analysis of the subgraphs induced by the vertices of maximal length triangle sequences $M_n$ with  $n \leq 6$  and with 
the 
property that every 
edge in $E(M_n)$ is in two copies of $K_3$ in $G$ shows that for $l=1$, $[M_5]=K_5\setminus e$ or
$[M_6]=K_4 \sqcup K_4$. Since every $K_3$ is in a maximal length triangle sequence and every $K_3$ in 
$K_4$, $K_5 \setminus e$ or $K_4 \cup K_4$ is in a copy of $K_4$ the proof is complete.
\end{proof}

\begin{lem} \label{jco5}
Let $G$ be a $(2,\ell)$-tight graph for $l=1,2$ which contains a copy of $K_3$. Then either $G=K_4$, $G$ has an 
allowable $K_3$-to-edge move, an allowable $K_4$-to-vertex move or every copy of $K_3$ is in a copy
of $K_4 \sqcup K_4$ or $K_5 \setminus e$.
\end{lem}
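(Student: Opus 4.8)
The plan is to treat the four alternatives as a disjunction: I would assume that $G$ has no allowable $K_3$-to-edge move and that no copy of $K_4$ admits an allowable $K_4$-to-vertex move, and then show that either a contradiction arises (forcing $\ell=2$ back into one of the first three alternatives) or the final alternative holds (for $\ell=1$). First I would record the harmless reduction $|V(G)|>4$: a $(2,\ell)$-tight simple graph containing a $K_3$ and having at most four vertices would need $2|V|-\ell$ edges, which exceeds $\binom{|V|}{2}$ except when $|V|=4$ and $\ell=2$, in which case $G=K_4$. So apart from the already-listed case $G=K_4$ I may assume $|V(G)|>4$, which is exactly the hypothesis required to invoke Lemma \ref{jco7}. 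Next, since $G$ has no allowable $K_3$-to-edge move, alternative $(i)$ of Lemma \ref{jco4a} fails, so every edge lying in a copy of $K_3$ lies in at least two copies of $K_3$; Lemma \ref{jco4} then gives the crucial structural fact that \emph{every} copy of $K_3$ in $G$ is contained in a copy of $K_4$.

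The core of the argument is the analysis under the additional assumption that no $K_4$-to-vertex move is allowable. I would fix an arbitrary copy $K_3(a,b,c)$ and a copy of $K_4$ containing it. Because the $K_4$-to-vertex move on this $K_4$ is not allowable, Lemma \ref{jco7} supplies a copy $K_3(x,y,z)$ with $\{x,y\}\subset V(K_4)$ and $z\notin V(K_4)$ (and $xy\in E(K_4)$ automatically, as $K_4$ is complete). Applying Lemma \ref{jco4} to $K_3(x,y,z)$ yields a second copy of $K_4$, say $K_4'$, containing it. Setting $W=K_4\cup K_4'$, I observe that $K_4\cap K_4'$ contains the edge $xy$ and, since $z\notin V(K_4)$, has vertex set $\{x,y\}$ or $\{x,y,h\}$ (with $h$ the fourth vertex of $K_4'$), hence is a copy of $K_2$ or $K_3$; either way $f(K_4\cap K_4')=3$. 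Using the additive identity $f(A\cup B)=f(A)+f(B)-f(A\cap B)$ for the counting function $f$, this gives
\[ f(W)=f(K_4)+f(K_4')-f(K_4\cap K_4')=2+2-3=1, \]
and a vertex count identifies $W$ as a copy of $K_4\sqcup K_4$ when $|V(W)|=6$ and of $K_5\setminus e$ when $|V(W)|=5$.

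Finally I would split on $\ell$. For $\ell=2$ the subgraph $W$ has $f(W)=1<2$, contradicting the $(2,2)$-sparsity of $G$; so the assumption that no $K_4$-to-vertex move is allowable cannot hold, and one of the first three alternatives applies. For $\ell=1$ the value $f(W)=1$ is consistent with sparsity, and since $K_3(a,b,c)\subset K_4\subset W$ the chosen triangle sits inside a copy of $K_4\sqcup K_4$ or $K_5\setminus e$; as the triangle was arbitrary, the fourth alternative holds. The vertex and edge bookkeeping and the inclusion--exclusion identity are routine; I expect the one step that must be got right is the recognition that the triangle blocking the $K_4$-to-vertex move, produced by Lemma \ref{jco7}, is itself housed in a $K_4$ by Lemma \ref{jco4}, since it is precisely this forced overlap of two copies of $K_4$ in an edge or a triangle that both produces the contradiction when $\ell=2$ and pins the local structure to $K_4\sqcup K_4$ or $K_5\setminus e$ when $\ell=1$.
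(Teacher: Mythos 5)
Your proof is correct and follows essentially the same route as the paper's: Lemmas \ref{jco4a} and \ref{jco4} place every triangle in a copy of $K_4$, Lemma \ref{jco7} produces a blocking triangle sharing exactly two vertices with that $K_4$, Lemma \ref{jco4} houses that triangle in a second $K_4$, and the union of the two copies of $K_4$ is identified as $K_4 \sqcup K_4$ or $K_5 \setminus e$ according to the size of their intersection. Your write-up is in fact slightly more careful than the paper's, which leaves implicit both the $|V(G)|>4$ reduction needed to invoke Lemma \ref{jco7} and the fact that for $\ell=2$ the fourth alternative is vacuous (the latter only surfacing later, in the proof of Lemma \ref{jco8}).
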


\begin{proof}
Let the copy of $K_3$ be $K_3(a,b,c)$ and assume $G$ has no allowable $K_3$-to-edge move or 
$K_4$-to-vertex move. By Lemmas \ref{jco4a} and \ref{jco4} $K_3(a,b,c)$ is in a $K_4(a,b,c,d)$. Since this does not give an allowable $K_4$-to-vertex move, by Lemma \ref{jco7} there is a $K_3(c,d,e)$ (say) with $a,b,c,d,e$ all distinct and again by
Lemma \ref{jco4} there is a $K_4(c,d,e,g)$. If $a,b,c,d,e,g$ are distinct then $K_3(a,b,c)$ is in a copy of $K_4 \sqcup K_4$ and if $g=a$ or $b$ then $K_3(a,b,c)$ is in a copy of $K_5 \setminus e$.
\end{proof}

We combine the lemmas in this section to show that all suitable $(2,\ell)$-tight graphs
have an allowable reduction move.

\begin{lem} \label{jco8}
Let $G$ be $(2,2)$-tight. Then $G=K_4$ or $G$ has an inverse Henneberg $1$ move,
an inverse Henneberg $2$ move, an allowable $K_3$-to-edge move or an allowable $K_4$-to-vertex move.
\end{lem}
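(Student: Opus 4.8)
The plan is to use the degree–count machinery of Lemma \ref{jco6} together with the structural results of Lemmas \ref{jco4a}, \ref{jco4} and \ref{jco5}, specialised to $\ell=2$. We want to show that any $(2,2)$-tight simple graph other than $K_4$ admits at least one of the four listed reduction moves, so we argue by contradiction: assume $G \neq K_4$ and that $G$ has no inverse Henneberg $1$ move, no inverse Henneberg $2$ move, no allowable $K_3$-to-edge move and no allowable $K_4$-to-vertex move, and we derive a contradiction.

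First I would dispose of the case in which $G$ contains no copy of $K_3$ at all. As remarked just before Lemma \ref{jco1}, if $G$ has no $K_3$ then by Lemmas \ref{jco6} and \ref{k4lem} it has an inverse Henneberg $1$ or inverse Henneberg $2$ move (the absence of an inverse Henneberg $1$ move forces every vertex to have degree at least $3$, and then \eqref{degreecount} with $\ell=2$ produces degree-$3$ vertices, none of which can lie in a $K_4$ since that would require a $K_3$; so Lemma \ref{k4lem} supplies an inverse Henneberg $2$ move). This already contradicts our assumption, so we may assume $G$ contains a copy of $K_3$.

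With a copy of $K_3$ present I would invoke Lemma \ref{jco5}. Since by assumption $G$ has no allowable $K_3$-to-edge or $K_4$-to-vertex move and $G \neq K_4$, that lemma forces every copy of $K_3$ in $G$ to lie inside a copy of $K_4 \sqcup K_4$ or $K_5 \setminus e$. The remaining work is to observe that this structural conclusion is incompatible with $(2,2)$-tightness for a graph with more than $5$ vertices. Indeed $K_5 \setminus e$ has $f = 2\cdot 5 - 9 = 1 < 2$ and $K_4 \sqcup K_4$ has $f = 2 \cdot 6 - 11 = 1 < 2$, so neither can be a subgraph of a $(2,2)$-sparse graph; more precisely, in the $\ell=2$ case the bound $n \le 4$ from the first paragraph of the proof of Lemma \ref{jco4} already rules out $K_5 \setminus e$ and $K_4 \sqcup K_4$ as maximal triangle sequences. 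Hence for $\ell=2$ the alternative in Lemma \ref{jco5} degenerates: every $K_3$ lies in a $K_4$, and (as in Lemma \ref{jco4}, $n=4$ case) this $K_4$ admits a $K_4$-to-vertex move unless some adjacent $K_3$ shares two vertices with it, which by Lemma \ref{jco4} would again force an overlapping $K_4$ and hence a $K_5 \setminus e$ or $K_4 \sqcup K_4$ subgraph — impossible under $(2,2)$-sparsity. This contradicts the assumption that no allowable $K_4$-to-vertex move exists, completing the argument.

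The step I expect to be the main obstacle is pinning down precisely why, for $\ell=2$, the $K_5\setminus e$ and $K_4 \sqcup K_4$ configurations cannot appear, and thus why Lemma \ref{jco5} collapses to ``every $K_3$ is in a $K_4$ which gives an allowable contraction.'' One must be careful that the sparsity count is applied to the \emph{induced} subgraph on the relevant vertex set (so that all chords are included) rather than to a triangle sequence graph $M_n$, since it is the induced subgraph that must satisfy $f \ge 2$. Once the forbidden subgraph observation $f(K_5\setminus e) = f(K_4\sqcup K_4) = 1$ is made explicit, the contradiction with $(2,2)$-sparsity is immediate, and the four listed moves are seen to cover all cases.
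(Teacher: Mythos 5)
Your proposal is correct and follows essentially the same route as the paper: Lemma \ref{jco6} supplies a copy of $K_4$ (and hence of $K_3$) once the inverse Henneberg moves are excluded, and Lemma \ref{jco5} together with the observation that $f(K_5\setminus e)=f(K_4\sqcup K_4)=1<2$, so neither is $(2,2)$-sparse, finishes the argument. Your separate treatment of the no-$K_3$ case and the detour through the internals of the proof of Lemma \ref{jco4} are unnecessary but harmless.
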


\begin{proof}
Assume $G$ has no inverse Henneberg $1$ move and no inverse Henneberg $2$. By Lemma \ref{jco6} $G$ has a
copy of $K_4$ and thus a copy of $K_3$. The proof is completed by Lemma \ref{jco5} since neither
$K_4 \sqcup K_4$ nor $K_5 \setminus e$ is $(2,2)$-sparse.
\end{proof}

\begin{lem} \label{jco9}
Let $G$ be $(2,1)$-tight. Then $G=K_4 \sqcup K_4$ or $G=K_5 \setminus e$ or $G$ has an inverse 
Henneberg $1$ move, an inverse Henneberg $2$ move, an allowable $K_3$-to-edge move, an allowable $K_4$-to-vertex move or an edge separation move.
\end{lem}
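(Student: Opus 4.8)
The plan is to mirror the short proof of Lemma \ref{jco8}, using Lemmas \ref{jco6} and \ref{jco5} to force a rigid structural situation, and then to account for the one genuinely new phenomenon at $\ell=1$: the graphs $K_5\setminus e$ and $K_4\sqcup K_4$ are themselves $(2,1)$-tight, so the final alternative of Lemma \ref{jco5} can no longer be discarded on sparsity grounds. First I would assume $G$ admits none of the listed moves, and derive the working hypotheses. By Lemma \ref{jco6}, $G$ contains a copy of $K_4$ and hence a copy of $K_3$ (and $G\neq K_4$, since $f(K_4)=2\neq 1$); by Lemma \ref{jco5}, every copy of $K_3$ lies in a \emph{block}, by which I mean a copy of $K_5\setminus e$ or of $K_4\sqcup K_4$. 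I would also record, from Lemma \ref{k4lem} together with the absence of an inverse Henneberg $2$ move, that every degree-$3$ vertex sits in a copy of $K_4$ and hence in a block; consequently every vertex lying in no block has degree at least $4$.

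Next I would reformulate the goal. The modular identity $f(A\cup B)=f(A)+f(B)-f(A\cap B)$ holds for all subgraphs, so if an induced $(2,1)$-tight subgraph $H$ is joined to $K:=G[V(G)\setminus V(H)]$ by a single edge then $f(G)=f(H)+f(K)-1$, whence $f(K)=1$ and $K$ is $(2,1)$-tight. Conversely such a configuration is exactly a bridge both of whose sides are tight (no side can be a single vertex, as $G$ has no degree-$1$ vertex). Thus an edge separation move is available precisely when $G$ has a bridge, and it suffices to prove that either $V(G)$ is the vertex set of a single block (so $G=K_5\setminus e$ or $G=K_4\sqcup K_4$) or $G$ has a bridge.

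The structural backbone is two sparsity facts. First, a block has no proper $(2,1)$-tight subgraph: on $j\le 4$ vertices a $(2,1)$-tight graph would need $2j-1>\binom{j}{2}$ edges, and the $5$-vertex subgraphs of the two blocks are ruled out by direct inspection. Hence two distinct blocks $H\neq H'$ are vertex-disjoint, since a nonempty proper intersection would satisfy $f(H\cap H')\ge 2$, forcing $f(H\cup H')\le 0$ against $(2,1)$-sparsity, while a full intersection would exhibit one as a proper tight subgraph of the other. Second, by the same computation, any two vertex-disjoint $(2,1)$-tight subgraphs are joined by at most one edge. So $G$ splits into pairwise disjoint blocks $B_1,\dots,B_r$ and a set $U$ of vertices lying in no block (each of degree $\ge 4$); the edges internal to no block form an auxiliary graph $\Gamma$ on the super-nodes $B_1,\dots,B_r$ and $U$, and counting $f(G)=\sum_i f(B_i)+2|U|-m=r+2|U|-m=1$ gives $m=r+2|U|-1$, where $m$ is the number of these external edges.

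The finish is a connectivity count on $\Gamma$, noting that $G$ is connected (a disconnected $(2,1)$-tight graph has $f\ge 2$), hence so is $\Gamma$. When $U=\emptyset$ we get $m=r-1$ edges among $r$ blocks with at most one edge per pair, so $\Gamma$ is a tree; if $r=1$ then $G$ is a single block, and if $r\ge 2$ any tree edge is a single edge between two blocks whose removal disconnects $G$, i.e.\ a bridge, yielding the edge separation move. I expect the main obstacle to be the case $U\neq\emptyset$, where $\Gamma$ carries $|U|$ more edges than a tree and could a priori be bridgeless. Here I would play the degree-$\ge 4$ bound on $U$-vertices (each contributing at least four external edges, all incident to $U$) against the at-most-one-edge bound between disjoint tight pieces and the global budget $m=r+2|U|-1$, to show these external edges cannot simultaneously realise a bridgeless $\Gamma$; a bridge must therefore persist, and converting it to an edge separation move via the tightness identity above completes the proof.
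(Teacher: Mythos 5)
Your proposal follows essentially the same route as the paper's own proof: collect the copies of $K_5\setminus e$ and $K_4\sqcup K_4$ (your blocks, the paper's $Y_1,\dots,Y_n$), show they are pairwise vertex-disjoint by the same sparsity count, observe that the leftover vertices have degree at least $4$, and derive the identical edge budget (your $m=r+2|U|-1$ is the paper's $|E_0|=2|V_0|+n-1$); the paper then finds, by a single incidence count, a block with at most one external edge, giving $G=K_5\setminus e$, $G=K_4\sqcup K_4$, or an edge separation move. The one place you stop short --- the case $U\neq\emptyset$, which you flag as the main obstacle --- is not an obstacle at all: it closes in one line from exactly the data you assembled, and no case split on $U$ is needed. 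If $\Gamma$ is connected, has at least two super-nodes and has no bridge, then every super-node has degree at least $2$ in $\Gamma$ (a super-node of degree $1$ would make its unique incident edge a bridge), so summing degrees and using the degree-$\geq 4$ bound on $U$-nodes gives $2m\geq 2r+4|U|$, contradicting $2m=2r+4|U|-2$; note the at-most-one-edge bound between disjoint tight pieces, which you cite as an ingredient, is not even needed. The resulting bridge of $\Gamma$ lifts to a bridge of $G$ because the super-nodes are connected subgraphs, and your (correct) tightness reformulation converts it into an edge separation move, while a lone super-node forces $m=0$ and $G$ equal to a block. This is precisely the paper's count --- if each $Y_i$ were incident to at least two edges of $E_0$ then $|E_0|\geq(4|V_0|+2n)/2$, a contradiction --- so your bridge formulation is a repackaging of the same argument, the only cosmetic difference being that the paper's separating edge is always incident to a block, whereas yours may lie anywhere in $\Gamma$.
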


\begin{proof}
Assume $G$ has no inverse Henneberg 1 move, no inverse Henneberg 2 move, no allowable 
$K_3$-to-edge move and no allowable $K_4$-to-vertex move. By Lemma \ref{jco6}
each of the degree-3 vertices in $G$ is in a copy of $K_4$ and thus in a copy of $K_3$. By Lemma 
\ref{jco5} each of these $K_3$ is in a copy of $K_4 \sqcup K_4$ or $K_5 \setminus e$.

Let $Y=\{Y_1, \dots, Y_n\}$ be the set of subgraphs of $G$ which are each copies of 
$K_4 \sqcup K_4$ or $K_5 \setminus e$.

The subgraphs $Y_i \in Y$ are vertex disjoint since
\[f(Y_i \cup Y_j)= f(Y_i)+f(Y_j)-f(Y_i \cap Y_j) =2-f(Y_i \cap Y_j)\]
and $(2,1)$-sparsity requires $f(Y_i \cap Y_j) \leq 1$. Every proper subgraph $X$ of $K_4 \sqcup K_4$
or $K_5 \setminus e$ has $f(X) \geq 2$ so this requires $Y_i$ and $Y_j$ to be vertex disjoint.

Let $V_0$ and $E_0$ be the sets of vertices and edges in $G$ which are in none of the $Y_i \in Y$.
Then
\[f(G)=\sum_{i=1}^nf(Y_i)+2|V_0|-|E_0|\]
so $|E_0|=2|V_0|+n-1$. Each of the vertices in $V_0$
is incident to at least 4 edges in $E_0$. If each $Y_i$ is incident to at least 2 edges in $E_0$
%there are at least $4|V_0|+2n$ edge/vertex incidences in $E$ which implies $|E_0| \geq 2|V_0|+n$ for
then $|E_0| \geq (4|V_0|+2n)/2$ for a contradiction.

At least one of the $Y_i$ is incident to at most one edge in $E_0$. If this $Y_i$ is incident to
no edges in $E_0$ then $G=K_4 \sqcup K_4$ or $G=K_5 \setminus e$ since $G$ is connected. Otherwise 
$Y_i$ is incident to one edge $e \in E_0$ and $e$ provides an edge separation move.
\end{proof}

Using the above lemmas we reach the stated goal of this section.

\begin{proof}[Proof of $(1)\Rightarrow(2)$ in Theorem \ref{21theorem} or Theorem \ref{22refine}]
By induction using Lemma \ref{jco8} or Lemma \ref{jco9}.
\end{proof}

\section{Further Work}
\label{21apps}

We expect to be able to use Theorem \ref{21theorem} to prove the following conjecture discussed in the introduction.

\begin{con}\label{tjcon}
Let $\M$ be a cone, a torus, a union of concentric cones or a union of concentric tori and let $p$ be generic. Then $(G,p)$ is generically minimally rigid on $\M$ if and only if $G=K_{2},K_3, K_4$ or $G$ is $(2,1)$-tight.
\end{con}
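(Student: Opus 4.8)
The plan is to follow the standard two-part template for geometric rigidity characterisations, using Theorem \ref{21theorem} as the combinatorial engine. First I would set up the infinitesimal rigidity theory on $\M$: for a framework $(G,p)$ with $p$ mapping into $\M$, an infinitesimal flex is an assignment $u:V\to\bR^3$ with $u(v)$ tangent to $\M$ at $p(v)$ and $(p(u)-p(v))\cdot(u(u)-u(v))=0$ on each edge, so the rigidity matrix has $|V|$ surface-normal rows together with one row per edge. For each of the listed surfaces the continuous isometry group is one-dimensional --- rotation about the common axis, acting simultaneously on all components of a concentric union --- so the space of trivial flexes of a generic framework is one-dimensional and the Maxwell count for minimal rigidity is exactly $|E|=2|V|-1$. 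This is the surface analogue of the cylinder count ($\ell=2$) treated in \cite{NOP}, with the reduction from two ambient isometries to one being precisely what forces $\ell=1$.

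For necessity I would argue that at a generic $p$ the rows of the rigidity matrix associated with any subgraph $G'=(V',E')$ are independent only if $|E'|\le 2|V'|-1$; thus a generic minimally rigid framework, whose rigidity matrix has rank $2|V|-1$ with independent rows, has an underlying graph that is $(2,1)$-sparse and satisfies $|E|=2|V|-1$, hence is $(2,1)$-tight, once $|V|\ge 5$. For $|V|\le 4$ there is no simple $(2,1)$-tight graph, since the required edge count $2|V|-1$ exceeds $\binom{|V|}{2}$; this is why the small complete graphs $K_2,K_3,K_4$ appear as separate exceptional cases, to be checked directly.

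For sufficiency I would induct on $|V|$ using the construction of Theorem \ref{21theorem}: it suffices to verify that the base graphs $K_5\setminus e$ and $K_4\sqcup K_4$ are generically rigid on $\M$ and that each of the five moves sends a generically rigid framework to a generically rigid framework. The Henneberg $1$ and $2$ moves are handled by the usual rank and kernel arguments adapted to the surface constraint, as in the planar and cylindrical cases. The characteristic move is the \emph{edge joining} move: if $K$ and $H$ are generically rigid, each has a one-dimensional (trivial) flex space, so their disjoint union has a two-dimensional flex space spanned by independent rotations of the two pieces about the axis; a single generic edge $uv$ imposes one linear condition that generically annihilates the relative rotation, leaving only the global rotation, whence the joined framework is rigid. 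It is exactly the one-dimensionality of the isometry group that makes a single connecting edge suffice, matching $f(K)+f(H)-1=1$.

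The hard part will be the vertex-to-$K_4$ and, especially, the edge-to-$K_3$ (vertex-splitting) moves, together with verifying that the chosen surfaces are sufficiently regular for the genericity statements above to hold. Vertex splitting does not preserve rigidity for arbitrary realisations, so I would need a surface version of Whiteley's vertex-splitting theorem showing that at a generic $p$ the split framework attains the predicted rank; for vertex-to-$K_4$ one must check that the six internal edges together with the redistributed incident edges create no unexpected dependency. A second delicate point is that a cone is singular at its apex and a concentric union is disconnected, so I would need to confirm that a generic $p$ avoids the degenerate locus, that the rigidity matroid is well defined, and that the one-dimensional trivial-flex count is actually attained (rather than dropping) for the base graphs on these surfaces. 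Establishing this regularity, and the vertex-splitting rank estimate, is where the real work lies.
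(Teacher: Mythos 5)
This statement is not a theorem in the paper at all: it is Conjecture \ref{tjcon}, stated in Section \ref{21apps} (``Further Work''), and the paper offers no proof of it. The authors only say they \emph{expect} to prove it using Theorem \ref{21theorem}, by showing that the base graphs and the five construction moves preserve generic minimal rigidity on the relevant surfaces. Your proposal is precisely that anticipated programme --- combinatorial engine from Theorem \ref{21theorem}, Maxwell count $|E|=2|V|-1$ from the one-dimensional isometry group, necessity via independence of rows, sufficiency by induction over the moves --- so in terms of strategy you are aligned with what the authors have in mind.

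But what you have written is a plan, not a proof, and you say so yourself: the preservation of generic rigidity under the edge-to-$K_3$ (vertex-splitting) move and the vertex-to-$K_4$ move on $\M$ is exactly the open content of the conjecture, and your proposal defers it (``where the real work lies''). A surface analogue of Whiteley's vertex-splitting theorem is not a routine adaptation: the planar proof uses a limiting/specialisation argument whose surface version requires controlling the rank of the surface rigidity matrix under a degeneration that must stay on $\M$, and nothing in the paper (or in \cite{NOP}, which handles only Henneberg 1, Henneberg 2 and graph extension on the cylinder) supplies this. Likewise your edge-joining argument, while the count $f(K)+f(H)-1=1$ is suggestive, needs an actual proof that a generic connecting edge is not annihilated by the relative rotation field --- plausible, but an argument, not an observation. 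Finally, the regularity issues you flag (the cone apex, disconnectedness of concentric unions, attainment of the one-dimensional trivial-flex space for the base graphs $K_5\setminus e$ and $K_4\sqcup K_4$) are genuine obstacles, not formalities; until they are resolved the necessity direction's subcount argument ($|E'|\le 2|V'|-1$ for all subgraphs) is also incomplete, since it presupposes that every subframework inherits a full one-dimensional space of trivial flexes at generic $p$. So the honest verdict is: correct strategy, matching the paper's stated intentions, but with the central steps missing --- which is why the paper states this as a conjecture rather than a theorem.
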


It would also be interesting to consider surfaces that do not admit any rigid-body motions. For such surfaces there are immediate additional problems. 
For example Equation \ref{degreecount} with $\ell=0$ shows that the minimum degree in a $(2,0)$-tight graph may be $4$ so additional Henneberg type
operations are required.
This actually provides additional motivation for studying these graphs since the obvious choices to take are $X$ and $V$-replacement 
as studied by Tay and Whiteley \cite{T&W} in the $3$-dimensional setting. Indeed they conjecture that these operations (with additional 
conditions for $V$-replacement)
preserve rigidity in $3$-dimensions. 

It is also interesting to note that the $d$-dimensional version of the edge-to-$K_3$ move, known in the literature as vertex 
splitting \cite{Whi6}, is one of a very short list of operations known to preserve rigidity in arbitrary dimension. Nevertheless there is no
conjectured inductive construction, even in $3$-dimensions, that makes use of this. We hope that our methods for dealing with the
edge-to-$K_3$ move for $(2,\ell)$-tight graphs may be useful in finding such a construction.

There are more exotic settings in which the class of $(2,1)$-tight graphs are the appropriate combinatorial tool needed to classify 
generic minimal rigidity. For example we could take $\M$ to be two parallel (but not concentric) cylinders. Here there is only one 
rigid-body  motion of $\M$ in $\bR^3$, or we may take $\N$ to be a cylinder coaxial to a cone. Again there is only one rigid-body motion
(this time a rotation about the central axis). In such reducible settings there is a little more work to do to in considering which surface each framework point lies on. This extra requirement is particularly evident for 
$\N$, but in either case a $(2,1)$-tight subgraph realised purely on one cylinder would be overconstrained.

A similar but deeper topic is the problem of when a framework realisation is unique (this is the topic of global rigidity, see for 
example \cite{J&J} and \cite{Con2}). To characterise the global rigidity of frameworks supported on an algebraic surface one of the key steps is to analyse the circuits of the rigidity matroid $\R_{\M}$ (this is the linear matroid induced 
by the linear independence of the rows of the surface rigidity matrix). Since the independent sets in $\R_{\M}$, for $\M$ a cylinder, 
may be identified 
with the $(2,2)$-tight graphs (\cite[Theorem $5.4$]{NOP}), the circuits may be identified with a sub-class of the 
$(2,1)$-tight graphs. Such a recursive construction is given in \cite{Nix} and finding a similar construction for circuits in the $(2,1)$-tight
matroid is open.

\end{document}